\newcommand{\N}{\mathbb N}
\newcommand{\Z}{\mathbb Z}
\newcommand{\R}{\mathbb R}
\newcommand{\C}{\mathbb C}
\newcommand{\T}{\mathbb T}
\newcommand{\id}{\textup{id}}
\begin{document}

\title{Extensions of the Laurent Decomposition and the spaces $A^p(\Omega)$}
\author{Nikolaos Georgakopoulos}
\address{University of Athens, Department of Mathematics, 157 84 Panepistemiopolis, Athens, Greece}
\email{nikolaosgmath@gmail.com}

\theoremstyle{plain}

\newtheorem{prop}{Proposition}
\newtheorem{thm}[prop]{Theorem}
\newtheorem{cor}[prop]{Corollary}
\newtheorem{lem}[prop]{Lemma}

\theoremstyle{definition}
\newtheorem{defn}[prop]{Definition}
\newtheorem{rem}[prop]{Remark}
\newtheorem{que}[prop]{Question}
\newtheorem{nota}[prop]{Notation}

\begin{abstract} 
We generalize the classical Laurent decomposition in the setting of domains $\Omega\subseteq \C$ bounded by Jordan curves. This leads us to study the Fr\'echet spaces $A^p(\Omega)$, and their relation to the spaces $C^p(\partial \Omega)$. In the final section, we examine the case of a non Jordan domain $\Omega$. \end{abstract}

\maketitle

\tableofcontents

\section{Introduction}

\indent It is well known that if $\Omega\subseteq \C$ is a domain of finite connectivity, then every $f\in H(\Omega)$ has a decomposition on $\Omega$ as $f=f_0+f_1+\cdots+f_n$ for $f_k\in H(V_k)$, where $V_1,...,V_n$ denote the complements of the connected components of $(\C\cup \{\infty\})\setminus \Omega$. This is the Laurent decomposition of $f$; see for instance \cite{Costakis}. 
If $f$ extends continuously over $\overline{\Omega}$, then it is easily seen that each $f_k$  extends continuously over $\overline{V_k}$. More generally, if $f\in A^p(\Omega)$ then $f_k\in A^p(V_k)$, where for $p\in \{0,1,2,...\}\cup \{\infty\}$, $A^p(\Omega)$ denotes the set of functions $f\in H(\Omega)$ whose derivatives $f^{(l)}$, $0\le l\le p$, $l\in \N$,  have continuous extensions over $\overline{\Omega}$.

As a special case, we can take $\Omega$ to be a domain bounded by a finite number of pairwise disjoint Jordan curves, such as the annulus $\Omega=\{z\in \C:r<|z|<1\}$, $0<r<1$. The limit case as $r\to 1^-$ is the case of the unit circle $\T$. The Laurent decomposition can be generalized in this setting, as every function $f\in C^{\infty}(\T)$ can be written as $f=g+h$ with $g\in A^{\infty}(D)$ and $h\in A^{\infty}(\C\setminus \overline{D})$, $\lim_{z\to \infty}h(z)=0$, $D$ being the open unit disc. The analogous statement for the spaces $C^p(\T),A^p(D)$ and $A^p(\C\setminus \overline{D})$ does not hold for $p<+\infty$; this is related to the fact that the disc algebra $A(D)=A^0(D)$ is not complemented in $C(\T)=C^0(\T)$ (\cite{Rudin}). We prove this result in section \ref{CircleCaseSection}, along with the fact that the spaces $A^p(D)$ are isomorphic Banach spaces for $p<+\infty$ (with their natural norms) and the fact that the space $A^{\infty}(D)$ is a non normable Fr\'echet space.

In section \ref{JordanDomainSection}, we examine potential generalizations of the previous results, when the disc $D$ is replaced by a Jordan domain $\Omega$. The spaces $C^p(\partial \Omega)$, $0\le p\le +\infty$, are defined via the parametrization of $\partial \Omega$ induced by any Riemann mapping of $D$ onto $\Omega$. In order to extend the results of the preceding section, we place additional hypotheses on the geometry of $\Omega$, such as the interior chord arc condition (\cite{Chord Arc}) and the boundedness of the geodesics (\cite{HInfinity}), or on the Riemann mapping $\phi:D\to \Omega$ itself, such as $\phi\in A^p(D)$ and $\phi'(z)\neq 0$ for all $z\in \overline D$. Under some of those conditions, we also prove that the spaces $A^p(\Omega)$ and $A^p(D)$ are isomorphic as Banach spaces, for $p<+\infty$, and that $A^{\infty}(\Omega)$ has no norm inducing its natural topology.

In section \ref{InternallTangentSection}, we consider the case of a particular domain $\Omega$ bounded by two Jordan curves meeting at a point. For $p=+\infty$ the ''Laurent decomposition'' is true, but the case $p<+\infty$ remains open. We suspect the answer is negative, and we have reduced this to the existence of a function $f\in A(\Omega)$ so that the Cauchy transform of $f|_{\T}$ ($\T\subseteq \partial \Omega$) diverges as $z\to 1$, $|z|<1$ (see Question \ref{FinalQuestion}).

Section \ref{PrelimSection} contains some preliminary results needed in the sequel.

Finally, we remark that if $\Omega$ is a domain of finite connectivity, then $A^p(\Omega)$ is a finite direct sum of spaces $A^p(V_k)$, so we can extend our results in this setting, under some additional assumptions on $\Omega$. We do not discuss such extensions in the present article, nor do we discuss the several variables case or the density of polynomials or rational functions in $A^p(\Omega)$. These facts will be treated elsewhere.

\section{Preliminaries}\label{PrelimSection}

We first collect two elementary facts of functional analysis, that we will use throughout this article. The following is well known

\begin{prop}\label{Functional1}Let $B$ be a topological vector space and $A$ be a subspace of $B$. The following are equivalent
\begin{itemize}\item[1.] There is a continuous linear map $B\to A$ that fixes $A$ (a projection $B\to A$).
\item[2.] There is another subspace $C$ of $B$ so that $B=A+C$, $A\cap C=0$ and the projections $B\to A$ and $B\to C$ are continuous.\end{itemize}\end{prop}

If the second item holds, $B$ is the direct sum of $A$ and $C$, $B=A\oplus C$, and $A$ is complemented in $B$. Note that this forces $A,C$ to be closed subspaces of $B$.

\begin{prop}\label{Functional2}Let $B$ be a Fr\'echet space and $A,C$ be subspaces of $B$ with $A+C=B$, $A\cap C=0$. The following are equivalent
\begin{itemize}\item[1.] $B=A\oplus C$.
\item[2.] $A$ and $C$ are closed in $B$.
\item[3.] $A$ and $C$ are Fr\'echet spaces in the induced topology.
\end{itemize}\end{prop}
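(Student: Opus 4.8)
The plan is to prove the chain of implications $1\Rightarrow 2\Rightarrow 3\Rightarrow 1$, with the closing implication carrying essentially all the weight through the Open Mapping Theorem for Fr\'echet spaces.

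For $1\Rightarrow 2$ I would simply invoke the remark recorded after Proposition \ref{Functional1}: if $B=A\oplus C$, then the projection $P\colon B\to A$ along $C$ is continuous, whence $A=\ker(\id-P)$ and $C=\ker P$ are kernels of continuous linear maps and therefore closed.

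For $2\Leftrightarrow 3$ I would use only that $B$, being Fr\'echet, is a complete metrizable topological vector space. The topology induced on any subspace is again metrizable, so the entire content of item 3 is the completeness of $A$ and $C$ in the induced metric. Since a subset of a complete metric space is complete precisely when it is closed, items 2 and 3 coincide.

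The heart of the matter is $3\Rightarrow 1$. Here I would form the product Fr\'echet space $A\times C$ together with the addition map $\Phi\colon A\times C\to B$, $\Phi(a,c)=a+c$, which is linear and continuous; the hypotheses $A+C=B$ and $A\cap C=0$ say exactly that $\Phi$ is a bijection. The Open Mapping Theorem then makes $\Phi$ a topological isomorphism, so $\Phi^{-1}$ is continuous, and composing it with the coordinate projections $A\times C\to A$ and $A\times C\to C$ produces continuous projections $B\to A$ and $B\to C$. This is item 2 of Proposition \ref{Functional1}, and hence $B=A\oplus C$.

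The step I expect to be the crux — and the reason the Fr\'echet hypothesis cannot be dropped — is the appeal to the Open Mapping Theorem in $3\Rightarrow 1$, since this is where the completeness of all three spaces is genuinely used; a continuous \emph{algebraic} splitting need not be topological without it. The remaining implications are routine, so the whole proof reduces to verifying that $\Phi$ is a continuous bijection of Fr\'echet spaces and then quoting the theorem.
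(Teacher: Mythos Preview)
Your proof is correct and matches the paper's approach exactly: the paper proves only the implication $3\Rightarrow 1$, via the same addition map $A\times C\to B$ and the Open Mapping Theorem, leaving the other implications as routine. Your additional remarks on $1\Rightarrow 2$ and $2\Leftrightarrow 3$ simply spell out what the paper takes for granted.
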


\begin{proof}We will prove that condition 3 implies condition 1. If $A,C$ are Fr\'echet spaces, $A\times C$ is a Fr\'echet space (with the sum of the semi-norms) and the canonical map $A\times C\to B$ is a continuous linear bijection. By the Open Mapping Theorem it is an isomorphism, so the projection $B\to A\times C\to A$ is continuous.
\end{proof}

For $0\le p\le +\infty$, we denote by $A^p(D)$ the space of holomorphic functions on $D$ whose derivatives of order $l\in \N$, $0\le l\le p$,  extend continuously over $\overline D$. It is topologized via the semi-norms 
\begin{equation*}|f|_l=\sup_{z\in \overline D}|f^{(l)}(z)|=\sup_{z\in \T}|f^{(l)}(z)|\text{ , }0\le l\le p\text{ , }l\in \N\end{equation*}
$A^p(D)$ is a Banach space for $p<+\infty$ and a Fr\'echet space for $p=+\infty$. The disc algebra is $A(D)=A^0(D)$.

 $A_0^p(\hat \C\setminus \overline D)$ is the space of functions $f$ that are holomorphic on $D^c$, whose derivatives of order $l\in \N$, $0\le l\le p$, extend continuously over $\C\setminus D$, and $\lim_{z\to \infty}f(z)=0$. Its topology is given by the semi-norms
\begin{equation*}|f|_l=\sup_{z\in D^c}|f^{(l)}(z)|=\sup_{z\in \T}|f^{(l)}(z)|\text{ , }0\le l\le p\text{ , }l\in \N\end{equation*}
$A_0^p(\hat \C\setminus \overline D)$ is a Banach space for $p<+\infty$ and a Fr\'echet space for $p=+\infty$.  Here, $\hat \C$ denotes the extended plane $\C\cup \{\infty\}$.

We will need the following well known fact 

\begin{prop}If $f\in A(D)$ then $\hat f(n)=0$ for $n<0$ and if $f\in A_0(\hat{\C}\setminus \overline D)$ then $\hat f(n)=0$ for $n\ge 0$. In both cases, the Laurent coefficients are the Fourier coefficients of the $2\pi$-periodic function $f(e^{i\theta})$, $\theta\in \R$.\end{prop}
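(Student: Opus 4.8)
The plan is to reduce everything to the elementary fact that for a function analytic on a neighborhood of $\overline D$ the Fourier coefficients of its restriction to $\T$ are exactly its Taylor coefficients, and then to pass to the boundary by dilation.

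First I would treat $f\in A(D)$. Writing the Taylor expansion $f(z)=\sum_{k\ge 0}a_k z^k$ on $D$, for each fixed $r\in(0,1)$ the dilate $f_r(z):=f(rz)$ is analytic on the disc of radius $1/r>1$, so the series $\sum_{k\ge 0}a_k r^k e^{ik\theta}$ converges uniformly in $\theta$. Integrating term by term against $e^{-in\theta}$ gives $\widehat{f_r}(n)=a_n r^n$ for $n\ge 0$ and $\widehat{f_r}(n)=0$ for $n<0$. Next I would let $r\to 1^-$: since $f$ is continuous on the compact set $\overline D$ it is uniformly continuous there, so $f_r(e^{i\theta})=f(re^{i\theta})\to f(e^{i\theta})$ uniformly in $\theta$, and uniform convergence passes to the Fourier coefficients, whence $\widehat{f_r}(n)\to \hat f(n)$ for every $n$. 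Combining the two, $\hat f(n)=\lim_{r\to1^-}a_nr^n=a_n$ for $n\ge 0$ and $\hat f(n)=0$ for $n<0$. This simultaneously identifies the Fourier coefficients of $\theta\mapsto f(e^{i\theta})$ with the Taylor (Laurent) coefficients of $f$ and yields their vanishing for $n<0$.

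For $f\in A_0(\hat\C\setminus\overline D)$ I would run the symmetric argument with the Laurent expansion at infinity $f(z)=\sum_{k\ge 1}c_k z^{-k}$; here the nonnegative powers are absent precisely because $f$ is holomorphic at $\infty$ with $f(\infty)=0$. For $\rho>1$ the dilate $\theta\mapsto f(\rho e^{i\theta})=\sum_{k\ge1}c_k\rho^{-k}e^{-ik\theta}$ converges uniformly on $\T$, so its Fourier coefficients are $c_{-n}\rho^{n}$ for $n\le -1$ and $0$ for $n\ge 0$. Letting $\rho\to 1^+$ and using uniform continuity of $f$ on the compact annulus $\{1\le|z|\le 2\}$ gives $f(\rho e^{i\theta})\to f(e^{i\theta})$ uniformly, hence $\hat f(n)=0$ for $n\ge 0$ and $\hat f(n)=c_{-n}$ for $n\le -1$, which is again the asserted identification of Laurent with Fourier coefficients.

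The only delicate point, and the main obstacle, is the interchange of limit and integration in passing $r\to 1^-$ (respectively $\rho\to1^+$): this is exactly where continuity up to the boundary enters, through the uniform convergence on $\T$ supplied by uniform continuity on the relevant compact set. Everything else is the standard term-by-term integration of a uniformly convergent power series, so no further estimates are needed.
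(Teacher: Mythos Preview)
Your argument is correct and is the standard dilation-plus-uniform-continuity proof of this classical fact. The paper itself does not supply a proof here: the proposition is stated as ``well known'' and left without demonstration, so there is nothing to compare your approach against beyond noting that what you wrote is exactly the sort of elementary justification one would expect.
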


\begin{prop}\label{A^pC^pDerivatives}Let $f\in A(D)$ and $p\ge 1$. Then $f\in A^p(D)\iff f|_{\T}\in C^p(\T)$. In that case, 
\begin{equation*}\frac{d^lf}{(de^{i\theta})^l}=f^{(l)}\text{ , }1\le l\le p\text{ , }l\in \N\end{equation*}
where $f^{(l)}$ is the continuous extension over $\T$ of the complex derivative of order $l$ of $f$ on $D$.\\
The analogous statement is true for $f\in A(\hat{\C}\setminus \overline D)$.\end{prop}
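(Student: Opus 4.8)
The plan is to prove both implications together by induction on $p$, the whole argument reducing to the case $p=1$. Throughout write $g(\theta)=f(e^{i\theta})$, so that $f|_{\T}\in C^p(\T)$ means exactly that the $2\pi$-periodic function $g$ is $C^p$ in $\theta$, and read the intrinsic derivative as $\frac{df}{de^{i\theta}}=\frac{1}{ie^{i\theta}}g'(\theta)$, iterating this operation to define $\frac{d^lf}{(de^{i\theta})^l}$. Since $f=\sum_{n\ge 0}a_nz^n\in A(D)$, the preceding proposition identifies the Taylor coefficients $a_n=\hat f(n)$ with the Fourier coefficients $\hat g(n)$, and this dictionary between power series and Fourier series is the main tool.

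For the forward implication at $p=1$, assume $f\in A^1(D)$, so $f'$ extends continuously to $\overline D$. For $r<1$ the identity $\tfrac{\partial}{\partial\theta}f(re^{i\theta})=ire^{i\theta}f'(re^{i\theta})$ and the Fundamental Theorem of Calculus give $f(re^{i\theta_2})-f(re^{i\theta_1})=\int_{\theta_1}^{\theta_2}ire^{i\theta}f'(re^{i\theta})\,d\theta$. Letting $r\to 1^-$ and using the uniform continuity of $f'$ on the compact set $\overline D$ to pass the limit inside the integral, I get $f(e^{i\theta_2})-f(e^{i\theta_1})=\int_{\theta_1}^{\theta_2}ie^{i\theta}f'(e^{i\theta})\,d\theta$ with continuous integrand. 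Hence $g$ is $C^1$ with $g'(\theta)=ie^{i\theta}f'(e^{i\theta})$, which is precisely $\frac{df}{de^{i\theta}}=f'$ on $\T$.

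For the converse at $p=1$, assume $g\in C^1$. The Fourier coefficients of $g'$ are $in\,\hat g(n)=in\,a_n$, all supported on $n\ge 0$, so $\tfrac1i g'$ is a continuous function on $\T$ whose negative Fourier coefficients vanish. By the classical fact that such a function is the boundary trace of a disc-algebra function (its Poisson extension is holomorphic and continuous up to $\overline D$), the holomorphic function $\psi(z)=\sum_{n\ge 0}na_nz^n=zf'(z)$ extends continuously to $\overline D$ with $\psi(e^{i\theta})=\tfrac1i g'(\theta)$. Since $f'(z)=\psi(z)/z$ on $D\setminus\{0\}$ while $f'$ is already holomorphic at $0$, it follows that $f'$ extends continuously to $\overline D$; that is, $f\in A^1(D)$, and its boundary values agree with $\frac{1}{ie^{i\theta}}g'(\theta)=\frac{df}{de^{i\theta}}$.

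Finally I would run the induction. Assuming the equivalence and the formula for $p-1$, the case $p=1$ applied to $f$ gives $f'\in A(D)$ with $f'|_{\T}=\frac{1}{ie^{i\theta}}g'(\theta)$; if $g\in C^p$ then $g'\in C^{p-1}$ and, after multiplication by the $C^\infty$ factor $\frac{1}{ie^{i\theta}}$, the trace $f'|_{\T}$ lies in $C^{p-1}(\T)$, so the inductive hypothesis applied to $f'$ yields $f'\in A^{p-1}(D)$, i.e. $f\in A^p(D)$, together with $\frac{d^lf}{(de^{i\theta})^l}=(f')^{(l-1)}=f^{(l)}$ for $1\le l\le p$; the forward implication propagates identically via $f\mapsto f'$. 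The statement for $A(\hat\C\setminus\overline D)$ is the same after replacing $z$ by $1/z$ (equivalently, working with the negative Fourier modes). The one genuinely nontrivial ingredient is the classical theorem invoked in the $p=1$ converse, that a continuous function on $\T$ with vanishing negative Fourier coefficients extends to the disc algebra; everything else is the chain rule together with the power-series/Fourier-series dictionary.
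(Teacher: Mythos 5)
Your proof is correct. Note that the paper itself does not prove Proposition \ref{A^pC^pDerivatives}; it simply defers to the reference \cite{Vlasis}, which is listed as ``in preparation,'' so there is no argument in the text to compare yours against. Your self-contained argument is a standard and sound way to establish the result: the forward direction at $p=1$ via the chain rule for $\theta\mapsto f(re^{i\theta})$, the Fundamental Theorem of Calculus, and uniform convergence as $r\to 1^-$ is airtight; the converse correctly reduces to the classical fact that a continuous function on $\T$ with vanishing negative Fourier coefficients is the boundary trace of a disc-algebra function, identifies that extension with $zf'(z)$ through the Taylor/Fourier coefficient dictionary (which is exactly the preceding proposition in the paper), and then divides by $z$; and the induction via $f\mapsto f'$, using that $ie^{i\theta}$ and its reciprocal are $C^{\infty}$ and nonvanishing, correctly propagates both the equivalence and the formula $\frac{d^lf}{(de^{i\theta})^l}=f^{(l)}$. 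The reduction of the exterior case to the interior one by $z\mapsto 1/z$ is routine as you indicate (the chain-rule factors $-1/z^2$ are smooth and nonvanishing near $\T$). The only ingredient you leave as a black box, the $P$-extension theorem for continuous functions with one-sided spectrum, is genuinely classical, so nothing essential is missing.
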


\begin{proof}See \cite{Vlasis}.\end{proof}
\noindent We note that
\begin{equation*}\frac{df}{d\theta}=\frac{df}{de^{i\theta}}ie^{i\theta}\end{equation*}

\noindent For $p=0,1,....,+\infty$, the topology of $C^p(\T)$ is given by the semi-norms
\begin{equation*}\Big\|\frac{d^lf}{d\theta^l}\Big\|_{\infty}\text{ , }0\le l\le p\text{ , }l\in \N\end{equation*}
or equivalently by the semi-norms
\begin{equation*}\Big\|\frac{d^lf}{(de^{i\theta})^l}\Big\|_{\infty}\text{ , }0\le l\le p\text{ , }l\in \N\end{equation*}
The topology of $A^p(D)$ is also induced by the semi-norms
\begin{equation*}\sup_{z\in \T}|f^{(l)}(z)|=\Big\|\frac{d^lf}{(de^{i\theta})^l}\Big\|_{\infty}\end{equation*}
It follows that the restriction map $A^p(D)\to C^p(\T)$ is an embedding, and since the spaces $A^p(D), C^p(\T)$ are complete, we have:

\begin{prop}For all $p=0,1,...,+\infty$, $A^p(D)$ and $A^p_0(\hat{\C}\setminus \overline D)$ are closed subspaces of $C^p(\T)$, with trivial intersection.\end{prop}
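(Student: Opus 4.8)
The statement has two independent parts: that each of $A^p(D)$ and $A^p_0(\hat{\C}\setminus\overline D)$ is closed in $C^p(\T)$, and that their intersection is trivial. The plan is to obtain closedness from a completeness argument and the trivial intersection from a Fourier-coefficient argument, both building directly on the facts assembled just above the statement.

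For closedness I would invoke what has already been recorded, namely that the restriction map $A^p(D)\to C^p(\T)$ is a topological embedding: it is a homeomorphism onto its image with the subspace topology, because the semi-norms $\sup_{z\in\T}|f^{(l)}(z)|$ defining $A^p(D)$ coincide with the $C^p(\T)$ semi-norms $\|d^lf/(de^{i\theta})^l\|_\infty$ by Proposition \ref{A^pC^pDerivatives} together with the identity $df/d\theta=(df/de^{i\theta})\,ie^{i\theta}$. Since $A^p(D)$ is complete (Banach for $p<+\infty$, Fr\'echet for $p=+\infty$), its homeomorphic image is a complete subspace of the metrizable space $C^p(\T)$, and a complete subspace of a metric space is closed; hence the image is closed. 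The identical argument, using the analogue of Proposition \ref{A^pC^pDerivatives} for $A(\hat{\C}\setminus\overline D)$ asserted there, yields the closedness of $A^p_0(\hat{\C}\setminus\overline D)$.

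For the trivial intersection I would work entirely at the level of boundary functions. Because the restriction maps are injective, I identify $A^p(D)$ and $A^p_0(\hat{\C}\setminus\overline D)$ with their images in $C^p(\T)$, so that a function $g$ in the intersection is simultaneously the boundary trace of some $f\in A^p(D)\subseteq A(D)$ and of some $h\in A^p_0(\hat{\C}\setminus\overline D)\subseteq A_0(\hat{\C}\setminus\overline D)$. By the preceding proposition identifying the Laurent coefficients with the Fourier coefficients of the boundary function, one has $\hat g(n)=\hat f(n)=0$ for $n<0$ and $\hat g(n)=\hat h(n)=0$ for $n\ge 0$. Thus every Fourier coefficient of $g$ vanishes, and so $g=0$ by uniqueness of Fourier series for continuous functions on $\T$.

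Given the amount of preparation already in place, neither part is a serious obstacle; the only point demanding care is the closedness step, where I must be certain that the restriction is a homeomorphism \emph{onto its image with the subspace topology} (so that completeness transfers across) and that $C^p(\T)$ is metrizable (so that ``complete implies closed'' applies). Both hold here: the first by the exact matching of semi-norms noted above, and the second because $C^p(\T)$ is a Banach space for $p<+\infty$ and a Fr\'echet space for $p=+\infty$. The intersection argument, by contrast, is uniform in $p$, since $A^p(D)\subseteq A(D)$ and $A^p_0(\hat{\C}\setminus\overline D)\subseteq A_0(\hat{\C}\setminus\overline D)$ for every $p$.
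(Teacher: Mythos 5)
Your proposal is correct and follows the same route the paper intends: closedness via the matching of semi-norms (Proposition \ref{A^pC^pDerivatives}) making the restriction map an embedding of a complete space into the metrizable space $C^p(\T)$, and trivial intersection via the vanishing of all Fourier coefficients from the preceding proposition. The paper leaves both steps implicit in the surrounding discussion, and your write-up simply makes them explicit.
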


\section{The case of the circle}\label{CircleCaseSection}

We begin by showing the analogue of the Laurent decomposition for functions in $C^{\infty}(\T)$. To do that, we first collect some well known results regarding the asymptotic behavior of the Fourier coefficients of functions in $C^{\infty}(\T),A^{\infty}(D)$ and $A^{\infty}_0(\hat{\C}\setminus \overline D)$:

\begin{itemize}\item A function $f\in C(\T)$ is in $C^{\infty}(\T)$ if and only if\, $n^l\hat f(n)=0$ as $|n|\to +\infty$ for all $l\ge 0$.
\item A function $f\in A(D)$ is in $A^{\infty}(D)$ if and only if $n^l\hat f(n)\to 0$ as $n\to +\infty$, for all $l\ge 0$
\item A function $f\in A_0(\hat{\C}\setminus \overline D)$ is in $A^{\infty}_0(\hat{\C}\setminus \overline D)$ if and only if $n^l\hat f(n)\to 0$ as $n\to -\infty$, for all $l\ge 0$
\end{itemize}

It is also easy to see that, if $f\in A^{\infty}_0(\hat{\C}\setminus \overline D)$ then $g(z)=f(1/z)$, $g(0)=0$, is in $A^{\infty}(D)$.

The usual topologies of $C^{\infty}(\T), A^{\infty}(D), A^{\infty}_0(\hat{\C}\setminus \overline D)$ are also given by these two (equivalent) families of semi-norms:
\begin{equation}\label{Semi-Norm Equivalence}\sup_{n\in \Z}|\hat f(n)|\text{ \ , \ }\sup_{n\in \Z}|n^l\hat f(n)|\text{ \ for }l>0\text{ , }l\in \N\end{equation}
\begin{equation}\label{Semi-Norm Equivalence2}|\hat f(n)|+\sum_{n\in \Z, n\neq 0}|n^l\hat f(n)|\text{\ , }l\in \N\end{equation}
If $f\in A^{\infty}(D)$, then in $\eqref{Semi-Norm Equivalence}$ and $\eqref{Semi-Norm Equivalence2}$ we can have $n$ range over $\N=\{0,1,...\}$, and if $f\in A^{\infty}_0(\hat{\C}\setminus \overline D)$ we can have $n$ range over the negative integers.

\begin{thm}\label{CInfinityDecomposition}Every $f\in C^{\infty}(\T)$ can be uniquely decomposed as $f=g+h$ for $g\in A^{\infty}(D)$ and $h\in A_0^{\infty}(\hat \C\setminus \overline D)$. Moreover,
\begin{equation*}C^{\infty}(\T)=A^{\infty}(D)\oplus A_0^{\infty}(\hat \C\setminus \overline D)\end{equation*}\end{thm}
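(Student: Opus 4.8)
The plan is to prove the theorem directly via Fourier coefficients, exploiting the characterizations of $C^{\infty}(\mathbb{T})$, $A^{\infty}(D)$ and $A_0^{\infty}(\hat{\mathbb{C}}\setminus\overline{D})$ already collected above. Given $f\in C^{\infty}(\mathbb{T})$ with Fourier coefficients $\hat f(n)$, I would define $g$ and $h$ by splitting the Fourier series at the index $0$: set
\begin{equation*}
g(z)=\sum_{n\ge 0}\hat f(n)z^n,\qquad h(z)=\sum_{n<0}\hat f(n)z^n.
\end{equation*}
The candidate decomposition is forced, so uniqueness follows immediately from the Proposition stating that $A^{\infty}(D)$ and $A_0^{\infty}(\hat{\mathbb{C}}\setminus\overline{D})$ have trivial intersection inside $C^{\infty}(\mathbb{T})$: if $f=g+h=g'+h'$ then $g-g'=h'-h$ lies in both subspaces, hence is $0$.

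The bulk of the work is to verify that $g$ and $h$ actually lie in the claimed spaces and that the map $f\mapsto (g,h)$ is continuous. Since $f\in C^{\infty}(\mathbb{T})$, the listed characterization gives $n^l\hat f(n)\to 0$ as $|n|\to+\infty$ for every $l\ge 0$. Restricting to $n\ge 0$ yields exactly the condition $n^l\hat f(n)\to 0$ as $n\to+\infty$, which is the criterion for a power series with those coefficients to define a function in $A^{\infty}(D)$; symmetrically, the tail over $n<0$ satisfies the criterion for $A_0^{\infty}(\hat{\mathbb{C}}\setminus\overline{D})$ (note $\hat h(n)=0$ for $n\ge 0$ and the $n=0$ term is absent, giving $\lim_{z\to\infty}h(z)=0$). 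I would first confirm that these coefficient conditions indeed produce holomorphic functions on $D$ and $D^c$ respectively whose derivatives extend continuously to $\overline D$ and $\mathbb{C}\setminus D$, which is where the preliminary equivalences do the heavy lifting.

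For continuity, the cleanest route is to invoke Proposition~\ref{Functional2}: we already know $A^{\infty}(D)$ and $A_0^{\infty}(\hat{\mathbb{C}}\setminus\overline{D})$ are closed subspaces of the Fréchet space $C^{\infty}(\mathbb{T})$ with trivial intersection, and we have just shown they algebraically span $C^{\infty}(\mathbb{T})$. Condition~2 of that proposition (both summands closed) then yields condition~1, namely that the direct sum is topological and the projections are continuous. This is the decisive reduction: rather than estimating seminorms of the projections by hand, the closedness plus the open mapping theorem packaged in Proposition~\ref{Functional2} delivers the continuity for free.

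The main obstacle I anticipate is not any single deep estimate but making sure the seminorm characterizations are applied to the right objects: one must check that splitting the Fourier series does not disturb the decay rates (it does not, since each half-tail of a sequence going to $0$ still goes to $0$), and that the resulting power/Laurent series genuinely represent functions in $A^{\infty}(D)$ and $A_0^{\infty}(\hat{\mathbb{C}}\setminus\overline{D})$ in the sense of continuous extension of all derivatives, not merely as formal series. Once the algebraic decomposition $C^{\infty}(\mathbb{T})=A^{\infty}(D)+A_0^{\infty}(\hat{\mathbb{C}}\setminus\overline{D})$ with trivial intersection is established and both pieces are seen to be closed, Proposition~\ref{Functional2} finishes the proof with minimal further effort.
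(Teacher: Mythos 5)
Your proposal is correct and follows essentially the same route as the paper's own proof: split the Fourier series at index $0$, use the coefficient-decay characterizations to place $g$ and $h$ in $A^{\infty}(D)$ and $A_0^{\infty}(\hat{\C}\setminus\overline{D})$, get uniqueness from the trivial intersection, and invoke Proposition \ref{Functional2} (closedness of the two subspaces) for the topological direct sum. No gaps.
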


\begin{proof}If $f\in C^{\infty}(\T)$ consider $g(z)=\sum_{n\ge 0}\hat f(n)z^n$, $|z|<1$, and $h(z)=\sum_{n<0}\hat f(n)z^n$, $|z|>1$, $h(\infty)=0$. By the discussion in the beginning of this section, $g\in A^{\infty}(D)$, $h\in A^{\infty}_0(\hat \C\setminus \overline D)$, and of course $f=g|_{\T}+h|_{\T}$. 
The decomposition is unique since $f=0\implies g=-h$ which implies that all Fourier coefficients of $g,h$ are $0$. Finally, we can use Proposition \ref{Functional2} to derive the result regarding the direct sum.\end{proof}

There is an equivalent way to state this decomposition:

\begin{thm}\label{CInfinityConjugateDisc}Every $f\in C^{\infty}(\T)$ can be uniquely decomposed as $f=g+\bar h$, $g,h\in A^{\infty}(D)$ and $h(0)=0$.\end{thm}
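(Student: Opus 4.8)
The plan is to reduce Theorem~\ref{CInfinityConjugateDisc} to the already-established Theorem~\ref{CInfinityDecomposition} by exhibiting an explicit bijective correspondence between the summands $A_0^{\infty}(\hat{\C}\setminus \overline D)$ and the conjugates $\overline{h}$ with $h\in A^{\infty}(D)$, $h(0)=0$. First I would take $f\in C^{\infty}(\T)$ and apply Theorem~\ref{CInfinityDecomposition} to write $f=g+H$ uniquely with $g\in A^{\infty}(D)$ and $H\in A_0^{\infty}(\hat{\C}\setminus \overline D)$. The task is then to show that $H|_{\T}$ is exactly of the form $\overline{h}|_{\T}$ for a unique $h\in A^{\infty}(D)$ with $h(0)=0$.

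The key computational observation is that on $\T$ we have $\overline{e^{i\theta}}=e^{-i\theta}=1/e^{i\theta}$, so conjugation interchanges the positive and negative frequency parts of a Fourier expansion. Concretely, if $H\in A_0^{\infty}(\hat{\C}\setminus \overline D)$ then on the circle $H(e^{i\theta})=\sum_{n<0}\hat{H}(n)e^{in\theta}$, and hence
\begin{equation*}\overline{H(e^{i\theta})}=\sum_{n<0}\overline{\hat H(n)}\,e^{-in\theta}=\sum_{m>0}\overline{\hat H(-m)}\,e^{im\theta}.\end{equation*}
This is precisely the boundary trace of the function $h(z)=\sum_{m>0}\overline{\hat H(-m)}\,z^m$, which has no constant term, so $h(0)=0$. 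Using the already-recorded fact that $H\in A_0^{\infty}(\hat{\C}\setminus \overline D)$ iff $n^l\hat H(n)\to 0$ as $n\to-\infty$ for all $l$, together with the observation (stated just before Theorem~\ref{CInfinityDecomposition}) that $g(z)=H(1/z)$ lands in $A^{\infty}(D)$, I would conclude that the coefficient decay transfers, so $h\in A^{\infty}(D)$. Setting $\overline{h}=H$ on $\T$ then gives the desired representation $f=g+\overline{h}$.

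For uniqueness I would argue that $f=0$ forces $g=-\overline{h}$ on $\T$; comparing Fourier coefficients, the left-hand side $g$ has only nonnegative frequencies while $\overline{h}$ (with $h(0)=0$) has only strictly negative frequencies, so every Fourier coefficient of $g$ and of $h$ vanishes, giving $g=h=0$. Equivalently, uniqueness is inherited directly from the uniqueness in Theorem~\ref{CInfinityDecomposition} via the bijection $H\leftrightarrow h$.

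I do not expect a serious obstacle here, since the substance is entirely contained in Theorem~\ref{CInfinityDecomposition} and the elementary map $z\mapsto 1/z$ on $\T$; the only point requiring a little care is verifying that conjugation and the inversion $z\mapsto 1/z$ genuinely send $A_0^{\infty}(\hat{\C}\setminus \overline D)$ onto $\{\,\overline h : h\in A^{\infty}(D),\ h(0)=0\,\}$ at the level of the $C^{\infty}(\T)$ topology, i.e.\ that the coefficient characterizations match up and no smoothness is lost. This amounts to checking that the frequency-reflection $n\mapsto -n$ preserves the rapid-decay condition defining $A^{\infty}(D)$, which is immediate from the semi-norm descriptions in~\eqref{Semi-Norm Equivalence} and~\eqref{Semi-Norm Equivalence2}.
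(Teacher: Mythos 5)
Your proposal is correct and follows essentially the same route as the paper: the paper likewise splits the Fourier series of $f$ into its nonnegative- and negative-frequency parts (which is exactly the content of Theorem~\ref{CInfinityDecomposition}) and then conjugates the negative-frequency part, computing $\overline{\sum_{n<0}a_nz^n}=\sum_{n\ge 1}\bar a_{-n}z^n$ on $\T$ to produce $h\in A^{\infty}(D)$ with $h(0)=0$. Your uniqueness argument via comparison of Fourier coefficients is a harmless variant of the paper's observation that a holomorphic function equal to an antiholomorphic one must be constant.
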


\begin{proof}Let $f\in C^{\infty}(\T)$. We have 
\begin{equation*}f(z)=\sum_{n=-\infty}^{+\infty}a_nz^n=\sum_{n=0}^{+\infty}a_nz^n+\sum_{n=-\infty}^{-1}a_nz^n\text{   ,  }|z|=1\end{equation*}
 The second summand converges for $|z|>1$, and when restricted to $|z|=1$ we have that
\begin{equation*}\overline{\sum_{n=-\infty}^{-1} a_n z^n}=\sum_{n=-\infty}^{-1}\bar a_n(\bar z)^n=\sum_{n=-\infty}^{-1}\bar a_nz^{-n}=\sum_{n=1}^{+\infty}\bar a_{-n}z^n=h(z)\end{equation*}
where $h\in A^{\infty}(D)$ with $h(0)=0$. So $f=g+\bar h$, with $g(z)=\sum_{n=0}^{+\infty}a_nz^n$. The decomposition is unique, for if $f=0$ then $g=-\bar h$ forcing $g$ and $h$ to be constant on $D$.
\end{proof}

Consider $\overline{A_0^{\infty}(D)}=\{\bar f:f\in A^{\infty}(D)\text{ , }f(0)=0\}$, topologized via the semi-norms
\begin{equation*}\|\big(\overline f\big)^{(l)}\|_{\infty,\T}\text{ , }l\in \N\end{equation*}
rendering the conjugation map $\overline{A_0^{\infty}(D)}\to A_0^{\infty}(D)$ an isometric isomorphism. We embed $\overline{A_0^{\infty}(D)}$ into $C^{\infty}(\T)$ via the map $f\mapsto f|_{\T}$, and under this embedding, we have
\begin{equation*}\overline{A_0^{\infty}(D)}=A_0^{\infty}(\hat \C\setminus \overline D)\end{equation*}
This was shown in the proof of Theorem \ref{CInfinityConjugateDisc}. By Theorem \ref{CInfinityDecomposition} we obtain,

\begin{cor}$C^{\infty}(\T)=A^{\infty}(D)\oplus \overline{A_0^{\infty}(D)}$.\end{cor}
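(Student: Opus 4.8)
The plan is to derive this Corollary directly from the two theorems and the identification that precedes it, rather than re-proving any decomposition from scratch. By Theorem \ref{CInfinityDecomposition} we already have the topological direct sum $C^{\infty}(\T)=A^{\infty}(D)\oplus A_0^{\infty}(\hat\C\setminus\overline D)$. The preceding paragraph established that, under the embedding $f\mapsto f|_{\T}$, the subspace $\overline{A_0^{\infty}(D)}$ of $C^{\infty}(\T)$ coincides with $A_0^{\infty}(\hat\C\setminus\overline D)$ as a set, this being exactly the content of the computation in the proof of Theorem \ref{CInfinityConjugateDisc} where the conjugate of $\sum_{n\le -1}a_nz^n$ was rewritten as an element $h(z)=\sum_{n\ge 1}\bar a_{-n}z^n$ of $A^{\infty}(D)$ vanishing at $0$. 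So I would first state that the two subspaces are literally equal as subsets of $C^{\infty}(\T)$, and then argue that this equality is topological.

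The key point to check is that the identification is not merely set-theoretic but respects the Fr\'echet topology, so that substituting one closed subspace for an equal one preserves the direct-sum decomposition. Here I would invoke that $\overline{A_0^{\infty}(D)}$ carries the topology making conjugation $\overline{A_0^{\infty}(D)}\to A_0^{\infty}(D)$ an isometric isomorphism, defined by the semi-norms $\|(\overline f)^{(l)}\|_{\infty,\T}$. The embedding $f\mapsto f|_{\T}$ into $C^{\infty}(\T)$ sends these semi-norms to the restriction of the semi-norms $\|d^l/d\theta^l\|_\infty$ on $C^{\infty}(\T)$, up to the usual equivalence between the $\theta$-derivative and $e^{i\theta}$-derivative semi-norm families; since $A_0^{\infty}(\hat\C\setminus\overline D)$ is already a closed subspace of $C^{\infty}(\T)$ carrying the induced topology, the set equality upgrades to an equality of topological subspaces.

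Once the subspaces are identified topologically, the conclusion is immediate: replacing $A_0^{\infty}(\hat\C\setminus\overline D)$ by $\overline{A_0^{\infty}(D)}$ in the statement of Theorem \ref{CInfinityDecomposition} yields $C^{\infty}(\T)=A^{\infty}(D)\oplus\overline{A_0^{\infty}(D)}$. Alternatively, if one prefers a self-contained verification, I would appeal to Proposition \ref{Functional2}: the decomposition $f=g+\bar h$ from Theorem \ref{CInfinityConjugateDisc} gives $A^{\infty}(D)+\overline{A_0^{\infty}(D)}=C^{\infty}(\T)$ with trivial intersection, and since both summands are Fr\'echet spaces in their induced topologies (being isometrically isomorphic to $A^{\infty}(D)$ and a closed subspace thereof), item 3 of that Proposition forces the algebraic direct sum to be a topological one.

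I do not expect a genuine obstacle here, as the Corollary is essentially a translation of Theorem \ref{CInfinityDecomposition} through the conjugation map. The only point demanding care is the one flagged above: ensuring that the set-level identification $\overline{A_0^{\infty}(D)}=A_0^{\infty}(\hat\C\setminus\overline D)$ is compatible with the topologies, so that the continuity of the projections transfers. This is routine given the explicit semi-norm descriptions already recorded in the excerpt, so the proof will be short.
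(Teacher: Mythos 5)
Your proposal is correct and follows essentially the same route as the paper: the text preceding the Corollary identifies $\overline{A_0^{\infty}(D)}$ with $A_0^{\infty}(\hat{\C}\setminus \overline D)$ under the boundary-restriction embedding (citing the computation in the proof of Theorem \ref{CInfinityConjugateDisc}) and then invokes Theorem \ref{CInfinityDecomposition}. Your added care about the semi-norm compatibility of the identification, and the fallback via Proposition \ref{Functional2}, are sound elaborations of the same argument rather than a different proof.
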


In the rest of this section, we shall prove that similar decompositions are impossible for $p<+\infty$. We begin with some useful isomorphisms, the first of which was communicated to us by C. Panagiotis.

\begin{thm}\label{IsomorphismsDisc}For all $p<+\infty$, there is an isomorphism $\Phi:C^{p+1}(\T)\to C^p(\T)$ that restricts to isomorphisms $A^{p+1}(D)\to A^p(D)$ and $A^{p+1}_0(\hat{\C}\setminus \overline D)\to A^p_0(\hat{\C}\setminus \overline D)$. \\
It follows that the spaces $C^0(\T), C^1(\T), C^2(\T),...$ are isomorphic Banach spaces. The same is true for the spaces $A^0(D), A^1(D),A^2(D),...$ .\end{thm}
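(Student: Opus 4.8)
The plan is to construct the isomorphism $\Phi:C^{p+1}(\T)\to C^p(\T)$ explicitly via a Fourier-multiplier operator, and then verify that it respects the three subspaces in question. The natural candidate is a map that acts on Fourier coefficients by multiplying $\hat f(n)$ by a factor comparable to $|n|$ for large $|n|$, so as to trade one degree of smoothness for a derivative. The cleanest realization is to set, for $f$ with Fourier series $\sum_n \hat f(n)e^{in\theta}$,
\begin{equation*}\Phi(f)(\theta)=\sum_{n\in\Z}(1+|n|)\,\hat f(n)\,e^{in\theta}.\end{equation*}
First I would check that $\Phi$ maps $C^{p+1}(\T)$ continuously into $C^p(\T)$: differentiating $\Phi(f)$ term by term $l$ times (for $l\le p$) introduces a factor $(in)^l$, and the combined multiplier $(1+|n|)(in)^l$ is controlled by $(in)^{l+1}$ up to lower-order terms, so each seminorm $\|\frac{d^l}{d\theta^l}\Phi(f)\|_\infty$ is bounded by a combination of the seminorms of $f$ through order $l+1\le p+1$. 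The inverse multiplier is $(1+|n|)^{-1}$, which is bounded, so the inverse map is also continuous by the analogous estimate, giving a topological isomorphism.

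The key observation that makes this work for the subspaces is that the multiplier $(1+|n|)$ treats the ranges $n\ge 0$ and $n<0$ separately. On $n\ge 0$ it agrees with $1+n$, and on $n<0$ it agrees with $1-n$. Thus if $f\in A^{p+1}(D)$, so that $\hat f(n)=0$ for $n<0$ by the proposition on Laurent/Fourier coefficients, then $\Phi(f)$ again has no negative-index coefficients, so $\Phi(f)\in A^p(D)$; the seminorm estimates restrict verbatim since the suprema may be taken over $\T$ and the derivatives extend holomorphically by Proposition \ref{A^pC^pDerivatives}. Symmetrically, if $f\in A^{p+1}_0(\hat\C\setminus\overline D)$ then $\hat f(n)=0$ for $n\ge 0$, and $\Phi(f)$ has only negative-index coefficients, with the constant term (index $0$) absent so that the value at $\infty$ remains $0$; hence $\Phi(f)\in A^p_0(\hat\C\setminus\overline D)$. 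The inverse inherits the same structure. This establishes the three simultaneous isomorphisms.

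For the final two assertions, I would iterate: composing the isomorphisms $C^{p+1}(\T)\cong C^p(\T)$ for $p=0,1,2,\dots$ shows that all the Banach spaces $C^0(\T),C^1(\T),\dots$ are mutually isomorphic, and the restricted isomorphisms do the same for $A^0(D),A^1(D),\dots$. (One should note the hypothesis $p<+\infty$ is essential, since the argument telescopes finitely many smoothness orders and does not reach $A^\infty$.) The main obstacle I anticipate is purely bookkeeping rather than conceptual: one must confirm that the multiplier operator, a priori defined on Fourier series, genuinely sends $C^{p+1}$ into $C^p$ as a map of \emph{functions} and not merely of formal symbols. Because a single extra factor of $|n|$ is exactly one derivative's worth of decay, the target space $C^p$ (rather than $C^{p-1}$) is the correct one, and the boundedness of $(1+|n|)^{-1}$ guarantees the inverse is well behaved; the continuity in both directions then follows from the equivalent seminorm descriptions recorded before the statement. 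An alternative, should the direct multiplier estimates prove awkward at the endpoint $l=0$, is to build $\Phi$ from the operator $f\mapsto f'$ corrected by a rank-considerations term handling the constant and the loss of injectivity, but the clean multiplier $(1+|n|)$ sidesteps that by never vanishing.
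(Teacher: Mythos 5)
Your construction has a genuine gap at its core: the Fourier multiplier $(1+|n|)$ does \emph{not} map $C^{p+1}(\T)$ boundedly into $C^p(\T)$, and in fact does not map it into $C^p(\T)$ at all. The step where you assert that the seminorm $\bigl\|\tfrac{d^l}{d\theta^l}\Phi(f)\bigr\|_\infty$ is controlled because the multiplier $(1+|n|)(in)^l$ is ``controlled by $(in)^{l+1}$ up to lower-order terms'' silently uses the fact that pointwise domination of multipliers implies operator boundedness --- true on $L^2$, false on $C(\T)$ and $L^\infty(\T)$. Concretely, for $n\neq 0$ one has $1+|n|=\tfrac1{in}\cdot(in)\;-\;i\,\mathrm{sgn}(n)\cdot(in)$, so your operator differs from a harmless differential operator by the conjugation operator (multiplier $-i\,\mathrm{sgn}(n)$) applied to $f'$, and the conjugation operator is unbounded on $C(\T)$: there are continuous functions whose conjugate function is not even bounded. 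For an explicit failure at $p=0$, take $f(\theta)=\sum_{n\ge 2}\frac{-\cos(n\theta)}{n^2\log n}\in C^1(\T)$; then $(1+|n|)\hat f(n)\sim -\frac{1}{2|n|\log|n|}$ are all negative and not summable, so the Abel means of the series for $\Phi(f)$ tend to $-\infty$ at $\theta=0$ and $\Phi(f)\notin C(\T)$. There is also a structural reason the approach cannot be repaired while keeping $|n|$: boundedness of $\mathrm{sgn}(n)$ as a multiplier on $C(\T)$ is equivalent to boundedness of the Riesz projection, i.e.\ to $A(D)$ being complemented in $C(\T)$, which is exactly what Theorem \ref{A^pNotComplementalC^pCircle} (and the cited result of Rudin) denies; so a working $\Phi$ of your form would contradict the rest of the paper.

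The fix is to use a multiplier that is a genuine (local) differential operator rather than one involving $|n|$: the paper takes $\Phi(f)=\frac{df}{d\theta}+\hat f(0)$, i.e.\ the multiplier $in$ for $n\neq 0$ and $1$ for $n=0$, which obviously carries $C^{p+1}(\T)$ continuously into $C^p(\T)$; injectivity is elementary, surjectivity is obtained by exhibiting the inverse via integration (an explicit antiderivative construction, both on $\T$ and on $\overline D$), and the Open Mapping Theorem upgrades this to an isomorphism. The remainder of your outline is sound and matches the paper in spirit: on $A^{p+1}(D)$ and $A^{p+1}_0(\hat\C\setminus\overline D)$ the coefficients are one-sided, the restricted operator becomes $f\mapsto izf'(z)+f(0)$ (respectively its analogue at $\infty$), the constant term is absent in the exterior case so the value at $\infty$ stays $0$, and the final assertions follow by composing the isomorphisms for $p=0,1,2,\dots$. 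Only the choice of multiplier needs to change.
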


\begin{proof}Let $\Phi:C^{p+1}(\T)\to C^p(\T)$ be given by
\begin{equation}\label{CC^1Isomorphism}\Phi(f)=\frac{df}{d\theta}+\hat f(0)\end{equation}
This map is clearly continuous and linear, and it is easy to see that it is injective (a linear function $a\theta+b$, $a,b\in \C$, must be $2\pi$-periodic hence constant). We shall now show surjectivity. Let $g\in C^p(\T)$ and consider the function $S_g\in C^{p+1}(\T)$ given by,
\begin{equation*}S_g(e^{i\theta})=\int_0^{\theta}g(e^{i\omega})d\omega-\hat g(0)\theta\end{equation*}
If $f\in C^{p+1}(\T)$ is given by
\begin{equation}\label{CC^1Isomorphism2}f=S_g-\widehat{S_g}(0)+\hat g(0)\end{equation}
then we can easily check that $g=\Phi(f)$, so $\Phi$ is surjective. By the Open Mapping Theorem, $\Phi:C^{p+1}(\T)\to C^p(\T)$ is an isomorphism, and its inverse is given by the assignment $g\mapsto f$, $f$ being as in $\eqref{CC^1Isomorphism2}$.

We shall now prove that $\Phi$ restricts to an isomorphism $A^{p+1}(D)\to A^p(D)$. If $f\in A^{p+1}(D)$, we have
\begin{equation*}\Phi(f)(z)=\frac{df}{d\theta}(z)+\hat f(0)=\frac{df}{de^{i\theta}}(z)ie^{i\theta}+f(0)=f'(z)zi+f(0)\end{equation*}
for $z=e^{i\theta}$. This allows us to extend $\Phi(f)$ over the closed disc. Therefore, we have the map $\Phi:A^{p+1}(D)\to A^p(D)$, 
\begin{equation}\label{AA^1Isomorphism1}\Phi(f)(z)=f'(z)zi+f(0)\end{equation}
$\Phi$ is clearly a continuous linear injection $A^{p+1}(D)\to A^p(D)$. Before we show surjectivity, let us note that every function in $A^{p+1}(D)$ has an antiderivative in $A^p(D)$. Indeed, 
\begin{equation*}F(z)=\int_{[0,z]}f(\zeta)d\zeta\end{equation*}
 is an antiderivative of $f$ on $D$, and it is easy to check the uniform continuity of $F$ over $D$, hence $F$ extends continuously over $\overline D$. The first $p$ derivatives of $F$ also extend continuously over $\overline D$, so $F\in A^{p+1}(D)$. To show that $\Phi:A^{p+1}(D)\to A^p(D)$ is surjective, let $g\in A^p(D)$. The function
\begin{equation*}r(z)=\begin{cases}\dfrac{g(z)-g(0)}z&\textup{if, }z\neq 0\\
g'(0)&\textup{if, }z\neq 0\end{cases}\end{equation*}
has an antiderivative $G$ with $G(0)=0$. Consider $f:\overline D\to \C$,
\begin{equation}\label{AA^1Isomorphism2}f(z)=-iG(z)+g(0)\end{equation}
One can easily check that $f\in A^{p+1}(D)$ and $\Phi(f)=g$, so $\Phi:A^{p+1}(D)\to A^p(D)$ is also surjective. By the Open Mapping Theorem, $\Phi$ is an isomorphism, with inverse given by $g\mapsto f$, $f$ as in $\eqref{AA^1Isomorphism2}$.

Finally, let us show that the map $\Phi$ given in  $\eqref{CC^1Isomorphism}$ restricts to an isomorphism $A^{p+1}_0(\hat{\C}\setminus \overline D)\to A^p_0(\hat{\C}\setminus \overline D)$. If $f\in A^{p+1}_0(\hat{\C}\setminus \overline D)$ then $g(z)=f(1/z)$ is in $A^{p+1}(D)$ and $g(0)=0$. Then, $\Phi(g)$ is in $A^p(D)$ and is given by
\begin{equation*}\Phi(g)(z)=(f(z^{-1}))'zi+f(\infty)=f'(z^{-1})(-iz^{-1})\end{equation*}
The function $h:D^c\to \C$ given by $h(z)=-\Phi(g)(1/z)$ is in $A^p_0(\hat{\C}\setminus \overline D)$ and
\begin{equation*}h(z)=f'(z)iz\end{equation*}
The assignment $f\mapsto h$ defines an isomorphism $A^{p+1}_0(\hat{\C}\setminus \overline D)\to A^p_0(\hat{\C}\setminus \overline D)$ as it is the composition of isomorphisms $f\mapsto g\mapsto \Phi(g)\mapsto h$. Note that if we restrict $h(z)=f'(z)iz$ on $\T$ we have $h|_{\T}=\Phi(f)$. Therefore, $\Phi:C^{p+1}(\T)\to C^p(\T)$ restricts to an isomorphism $A^{p+1}_0(\hat{\C}\setminus \overline D)\to A^p_0(\hat{\C}\setminus \overline D)$. This completes the proof.
\end{proof}

\begin{thm}If $p<+\infty$, there is an $f\in C^p(\T)$ that can't be written as $f=g+h$ for any $g\in A^p(D)$ and any $h\in A^p_0(\hat{\C}\setminus \overline D)$.\end{thm}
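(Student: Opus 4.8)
The plan is to argue by contradiction: I would assume that \emph{every} $f\in C^p(\T)$ admits a decomposition $f=g+h$ with $g\in A^p(D)$ and $h\in A^p_0(\hat{\C}\setminus \overline D)$, and then derive a contradiction with the non-complementation of the disc algebra. The first step is to recast the assumed decomposability as a topological direct sum. Since $A^p(D)$ and $A^p_0(\hat{\C}\setminus \overline D)$ are closed subspaces of $C^p(\T)$ with trivial intersection (the last proposition of Section \ref{PrelimSection}), the hypothesis $A^p(D)+A^p_0(\hat{\C}\setminus \overline D)=C^p(\T)$ together with Proposition \ref{Functional2} forces $C^p(\T)=A^p(D)\oplus A^p_0(\hat{\C}\setminus \overline D)$ with \emph{continuous} projections. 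Thus the decomposability of every $f\in C^p(\T)$ is equivalent to $A^p(D)$ being complemented in $C^p(\T)$.

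The second step is to transfer the bare algebraic statement $A^p(D)+A^p_0(\hat{\C}\setminus \overline D)=C^p(\T)$ along the smoothness scale using Theorem \ref{IsomorphismsDisc}. The map $\Phi:C^{p+1}(\T)\to C^p(\T)$ constructed there is a linear bijection that restricts to isomorphisms $A^{p+1}(D)\to A^p(D)$ and $A^{p+1}_0(\hat{\C}\setminus \overline D)\to A^p_0(\hat{\C}\setminus \overline D)$. Consequently $\Phi$ carries $A^{p+1}(D)+A^{p+1}_0(\hat{\C}\setminus \overline D)$ onto $A^p(D)+A^p_0(\hat{\C}\setminus \overline D)$, and since $\Phi(C^{p+1}(\T))=C^p(\T)$ with $\Phi$ injective, the sum fills $C^{p+1}(\T)$ if and only if it fills $C^p(\T)$. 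Iterating $\Phi$ down to the bottom of the scale, the decomposability property at level $p$ is equivalent to the decomposability property at level $0$.

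The third step supplies the contradiction at the base case $p=0$. By the classical theorem of Rudin (\cite{Rudin}), the disc algebra $A(D)=A^0(D)$ is not complemented in $C(\T)=C^0(\T)$; by the equivalence of the first step this means precisely that there exists $f\in C(\T)$ admitting no decomposition $f=g+h$ with $g\in A(D)$ and $h\in A_0(\hat{\C}\setminus \overline D)$, so decomposability fails at level $0$. Combining this with the level-by-level equivalence of the second step yields the failure of the decomposition at every finite level $p$, which is the assertion. The genuinely hard input is Rudin's non-complementation theorem, which I take as given; everything else is the bookkeeping that promotes the $p=0$ obstruction to all finite $p$ through the smoothing isomorphism $\Phi$. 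The one point to treat with care is that the reformulation in the first step be genuinely bidirectional — that the purely algebraic identity $A^p(D)+A^p_0(\hat{\C}\setminus \overline D)=C^p(\T)$ really does upgrade to a continuous direct sum — which is exactly what Proposition \ref{Functional2} guarantees once the closedness of the two subspaces is in hand.
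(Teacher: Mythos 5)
Your proposal is correct and follows essentially the same route as the paper: the failure of decomposability at $p=0$ is imported from Rudin's non-complementation theorem via Proposition \ref{Functional2}, and the isomorphism $\Phi$ of Theorem \ref{IsomorphismsDisc} transfers the obstruction across all finite smoothness levels (the paper phrases this transfer as an induction, you as a level-by-level equivalence, but the mechanism is identical).
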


\begin{proof}We induct on $p$. The base case $p=0$ can be found in \cite{Rudin} (compare with Proposition \ref{Functional2}). So assume that $C^p(\T)\neq A^p(D)+A^p_0(\hat{\C}\setminus \overline D)$ while $C^{p+1}(\T)=A^{p+1}(D)+A_0^{p+1}(\hat{\C}\setminus \overline D)$ and take an $f\in C^p(\T)$ not in $A^p(D)+A^p_0(\hat{\C}\setminus \overline D)$. If $\Phi$ is as in Theorem \ref{IsomorphismsDisc}, $\Phi^{-1}(f)\in C^{p+1}(\T)$ hence $\Phi^{-1}(f)=g+h$ for some $g\in A^{p+1}(D)$ and $h\in A^{p+1}_0(\hat{\C}\setminus \overline D)$. But then $f=\Phi(g)+\Phi(h)$ with $\Phi(g)\in A^p(D)$ and $\Phi(h)\in A^p_0(\hat{\C}\setminus \overline D)$, contradicting our assumption on $f$.
\end{proof}
We can actually prove something quite stronger (compare with Proposition \ref{Functional2}):

\begin{thm}\label{A^pNotComplementalC^pCircle}If $p<+\infty$, $A^p(D)$ is not isomorphic to any complemented subspace of $C^p(\T)$.\end{thm}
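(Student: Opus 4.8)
The plan is to leverage the inductive structure that Theorem \ref{IsomorphismsDisc} provides, reducing the case of general $p<+\infty$ to the base case $p=0$. The key observation is that the isomorphism $\Phi:C^{p+1}(\T)\to C^p(\T)$ of Theorem \ref{IsomorphismsDisc} carries $A^{p+1}(D)$ isomorphically onto $A^p(D)$. Since "being isomorphic to a complemented subspace" is preserved under an isomorphism of the ambient spaces, if $A^p(D)$ were complemented in (or isomorphic to a complemented subspace of) $C^p(\T)$, then pulling back through $\Phi$ would show the same for $A^{p+1}(D)$ in $C^{p+1}(\T)$, and conversely. So the entire statement for all finite $p$ follows once we settle the base case $p=0$: \emph{$A^0(D)=A(D)$ is not isomorphic to any complemented subspace of $C(\T)$.}

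For the base case, I would argue by contradiction. Suppose $A(D)$ is isomorphic to a complemented subspace $E$ of $C(\T)$, say $C(\T)=E\oplus F$ with the projections continuous. The crucial structural fact I would invoke is that $C(\T)$ is a $\mathcal{C}(K)$-space (here $K=\T$), and complemented subspaces of $\mathcal{C}(K)$-spaces are severely constrained. The cleanest route is to recall that a complemented subspace of a $C(K)$ space which is itself isomorphic to $A(D)$ would force $A(D)$ to inherit properties incompatible with those of the disc algebra. The standard fact (essentially the content of the classical result in \cite{Rudin}) is that $A(D)$ is \emph{not} complemented in $C(\T)$ — there is no bounded projection of $C(\T)$ onto the image of $A(D)$ under the restriction embedding. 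The stronger statement here upgrades "not complemented" to "not even isomorphic to any complemented subspace," which requires distinguishing $A(D)$ from all complemented subspaces of $C(\T)$ by an isomorphic invariant.

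The natural invariant to use is weak sequential completeness together with the behavior of the canonical $c_0$ or $\ell^1$ structure. I would recall the theorem that every complemented subspace of a $C(K)$ space either contains a complemented copy of $c_0$ or is, in the relevant cases, forced into a specific class; more directly, the disc algebra $A(D)$ contains $c_0$ but does \emph{not} contain $c_0$ complementably, whereas a genuine structural dichotomy for complemented subspaces of $C(K)$ (an infinite-dimensional complemented subspace of $C(K)$ contains a complemented copy of $c_0$) then yields the contradiction: $A(D)$ has no complemented copy of $c_0$. Concretely, the steps are: (i) reduce to $p=0$ via $\Phi$; (ii) assume $A(D)\cong E$ with $E$ complemented in $C(\T)$; (iii) apply the dichotomy to produce a complemented $c_0$ inside $E$, hence inside $A(D)$; (iv) invoke the known fact that $A(D)$ admits no complemented copy of $c_0$ (a consequence of the $F$.~and~M.~Riesz theorem / the Havin–Pe\l czy\'nski circle of ideas) to reach a contradiction.

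The main obstacle I anticipate is step (iv), the incompatibility of $A(D)$ with a complemented copy of $c_0$: establishing this cleanly is the heart of the matter and is where the special function-theoretic nature of the disc algebra (as opposed to abstract Banach-space pigeonholing) must enter. I expect the argument to hinge on the fact that any projection of $C(\T)$ onto $A(D)$ would, via averaging against the rotation group or via the reproducing structure of Hardy space, produce a bounded projection of the Fourier series onto the analytic half — that is, a bounded Riesz projection on $C(\T)$ — which is known to fail. I would therefore frame the core lemma as: \emph{the existence of any bounded projection onto a subspace isomorphic to $A(D)$ inside $C(\T)$ would yield a bounded operator on $C(\T)$ behaving like the analytic projection, contradicting the unboundedness of the conjugate function operator on $C(\T)$.} Verifying that the abstract isomorphism can be transported to such a concrete projection is the delicate point, and it is precisely where care is needed to convert an isomorphic statement into the failure of a specific classical projection.
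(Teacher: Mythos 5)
Your reduction of the general case to $p=0$ by transporting a complemented decomposition through the isomorphism $\Phi$ of Theorem \ref{IsomorphismsDisc} is exactly the argument in the paper (stated there as an induction on $p$). The paper then simply quotes the base case from \cite{Wojtaszczyk}, whereas you attempt to prove it, and that attempt contains a genuine gap.

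The invariant on which your base case rests is false: $A(D)$ \emph{does} contain a complemented copy of $c_0$. Indeed, $A(D)$ is isomorphic to the $c_0$-sum of countably many copies of itself (a theorem of Wojtaszczyk, see \cite{Wojtaszczyk}), and projecting each summand onto the constants exhibits a complemented $c_0$; more concretely, a sequence of peak functions $f_n$ peaking at points $\zeta_n\to 1$ of $\T$ and uniformly small off disjoint arcs spans a copy of $c_0$ onto which $f\mapsto \sum_n (f(\zeta_n)-f(1))f_n$ is, after a small perturbation, a bounded projection. So while the dichotomy you invoke (every infinite-dimensional complemented subspace of a separable $C(K)$ contains a complemented copy of $c_0$, via Pe\l czy\'nski's theorem on non-weakly-compact operators and Sobczyk's theorem) is correct, it yields no contradiction, and step (iv) cannot be repaired. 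Your fallback is also not viable: Rudin's averaging argument produces the Riesz projection only from a projection onto the \emph{canonical}, rotation-invariant copy of $A(D)$ inside $C(\T)$, and says nothing about a subspace merely isomorphic to $A(D)$; you correctly identify this as the delicate point, but it is precisely the point that this route cannot cross. The actual proof of the base case in \cite{Wojtaszczyk} uses a different invariant altogether: by the little Grothendieck theorem every operator from a $C(K)$-space into a Hilbert space is $2$-absolutely summing, a property that passes to complemented subspaces and is violated by the Paley operator $f\mapsto (\hat f(2^k))_{k}$, which maps $A(D)$ boundedly onto $\ell^2$. As written, your proposal establishes only the reduction to $p=0$, which the paper also carries out, and leaves the base case unproved.
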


\begin{proof}The case $p=0$ can be found in \cite{Wojtaszczyk}. We induct on $p$, and assume that $A^p(D)$ is not isomorphic to any complemented subspace of $C^p(\T)$. If there are closed subspaces $K,L$ of $C^{p+1}(\T)$ such that $C^{p+1}(\T)=K\oplus L$ and $K\approx A^{p+1}(D)$ (we use the symbol $\approx$ for isomorphisms), then applying the isomorphism $\Phi$ of Theorem \ref{IsomorphismsDisc} on $C^{p+1}(\T)=K\oplus L$, we obtain that $C^p(\T)=K'\oplus L'$ for $K'\approx K$ and $L'\approx L$. Therefore, $A^p(D)\approx A^{p+1}(D)\approx K\approx K'$ (the first isomorphism is by Theorem \ref{IsomorphismsDisc}) hence $A^p(D)$ is isomorphic to a complemented subspace of $C^p(\T)$, contradicting the induction hypothesis.\end{proof}

As we showed in Theorem $\ref{IsomorphismsDisc}$, all spaces $C^p(\T)$ are isomorphic for $p<+\infty$, and all spaces $A^p(D)$ are isomorphic for $p<+\infty$. This fails if we allow $p=+\infty$:
\begin{thm}\label{CInfinityNoNorm}There is no norm inducing the usual topologies on $C^{\infty}(\T)$ and $A^{\infty}(D)$. Therefore, $C^{\infty}(\T)$ is not isomorphic to $C(\T)=C^0(\T)$ and $A^{\infty}(D)$ is not isomorphic to $A(D)=A^0(D)$.\end{thm}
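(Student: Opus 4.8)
The plan is to show that $C^{\infty}(\T)$ and $A^{\infty}(D)$ are non-normable by exhibiting them as genuine (not merely formal) Fr\'echet spaces whose topology cannot be given by a single norm. The key structural fact is that a Fr\'echet space is normable if and only if it admits a bounded neighborhood of the origin, equivalently if and only if one of the defining seminorms already dominates all the others (so that the topology is induced by that single seminorm). Thus the strategy is to prove that no finite subfamily of the seminorms $|f|_l=\sup_{z\in\T}|f^{(l)}(z)|$ suffices to recover the whole topology, by constructing, for each fixed $l$, functions that are small in the seminorms $|\cdot|_0,\dots,|\cdot|_l$ but arbitrarily large in some higher seminorm.

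Concretely, I would work with the monomials $f_n(z)=z^n$, whose Laurent/Fourier description is trivial: $f_n^{(l)}(z)=n(n-1)\cdots(n-l+1)z^{n-l}$, so $|f_n|_l=n(n-1)\cdots(n-l+1)\sim n^l$ as $n\to\infty$. First I would fix any $l_0$ and consider the normalized sequence $g_n=f_n/n^{l_0}$ in $A^{\infty}(D)$. Then $|g_n|_l=n^{l-l_0}\cdot(1+o(1))\to 0$ for every $l<l_0$ and stays bounded for $l=l_0$, so $g_n$ lies in a bounded set for the seminorms up to order $l_0$; but $|g_n|_{l_0+1}\sim n\to\infty$. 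Hence no neighborhood defined by finitely many seminorms $|\cdot|_0,\dots,|\cdot|_{l_0}$ can be bounded in $A^{\infty}(D)$, which is precisely the failure of the bounded-neighborhood criterion; therefore $A^{\infty}(D)$ carries no norm inducing its topology. The same monomials, viewed as elements $e^{in\theta}$ of $C^{\infty}(\T)$, give the identical computation using the equivalent seminorms $\|d^l f/d\theta^l\|_{\infty}$ or the Fourier-side seminorms $\sup_n|n^l\hat f(n)|$ from \eqref{Semi-Norm Equivalence}, so $C^{\infty}(\T)$ is non-normable as well.

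For the final consequences I would simply invoke that $C^0(\T)=C(\T)$ and $A^0(D)=A(D)$ are Banach spaces, hence normable; an isomorphism of topological vector spaces preserves normability (a norm pulls back through a linear homeomorphism to a norm inducing the same topology). Since $C^{\infty}(\T)$ and $A^{\infty}(D)$ are non-normable, they cannot be isomorphic to $C(\T)$ and $A(D)$ respectively.

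The one point deserving care is the exact formulation of the normability criterion I am relying on, namely that a Fr\'echet space whose topology is defined by an increasing sequence of seminorms is normable precisely when it has a bounded zero-neighborhood, and that a bounded zero-neighborhood would have to be absorbed by, hence comparable to, a ball of one of the finitely many seminorms. This is the classical Kolmogorov normability theorem, and the main obstacle is only to state it cleanly and to verify that my sequence $g_n$ indeed witnesses the unboundedness of every such candidate neighborhood rather than merely of the individual balls; the asymptotic estimates $|g_n|_l\sim n^{l-l_0}$ make this transparent, so I expect no real difficulty beyond organizing the quantifiers correctly.
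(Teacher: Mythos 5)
Your proof is correct, but it takes a genuinely different route from the paper's. You invoke Kolmogorov's normability criterion and produce explicit witnesses, the normalized monomials $z^n/n^{l_0}$ (respectively $e^{in\theta}/n^{l_0}$ on the circle), to show that every basic neighbourhood $\{f:|f|_l<\epsilon,\ 0\le l\le l_0\}$ is unbounded in the next seminorm $|\cdot|_{l_0+1}$, hence that no bounded neighbourhood of the origin exists. The paper instead assumes a norm exists, deduces that its closed unit ball is bounded in every seminorm $|\cdot|_l$, and then runs a diagonal Arzel\'a-Ascoli argument to show that the ball is compact, contradicting the Riesz theorem for infinite-dimensional Banach spaces. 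Your argument is more elementary and completely computational; the paper's argument in effect establishes the stronger Montel-type property that norm-bounded sets are relatively compact, and, more to the point for the rest of the article, it is the version that transfers to $A^{\infty}(\Omega)$ for Jordan domains satisfying the chord-arc-type property 4 (the later theorem on $A^{\infty}(\Omega)$ is proved precisely by adapting this compactness argument), whereas your monomial computation is tied to the disc and would not transport to a general $\Omega$ without knowing $A^{\infty}(\Omega)\approx A^{\infty}(D)$, which the paper leaves as an open question. One small point of hygiene: the reduction should be phrased as saying that a bounded neighbourhood of $0$ would \emph{contain} a basic seminorm ball, and a subset of a bounded set is bounded, rather than ``absorbed by, hence comparable to''; with that rephrasing your witnesses do exactly what is needed, and your deduction of the non-isomorphism statements from the invariance of normability under linear homeomorphisms is the same as the paper's.
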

\begin{proof}We will prove that if $C^{\infty}(\T)$ had a norm $\|\cdot\|$ inducing the same topology as the semi-norms $|\cdot|_l$, then its closed unit ball would be compact, contradicting that it is an infinite dimensional Banach space. Let $\|f_n\|\le 1$ and fix $l$ momentarily. The set 
\begin{equation*}\{f\in C^{\infty}(\T):|f|_l<1\}\end{equation*}
 is an open neighborhood of $0$, so it contains some 
\begin{equation*}\{f\in C^{\infty}(\T):\|f\|<r\}\end{equation*}
Thus, $|rf_n/2|_l<1\implies |f_n|_l<M_l$ uniformly for all $n$, for some $M_l<+\infty$. So it remains to show that $f_n$ has a subsequence converging to some $f\in C^{\infty}$ in all semi-norms $|\cdot|_l$.

The sequence $f_n$ is a uniformly bounded ($\|f_n\|_{\infty}=|f_n|_0<M_0$) and equicontinuous ($\|f'_n\|_{\infty}=|f_n|_1<M_1$) family of continuous functions on the compact set $\T$, so by the Arzel\'a-Ascoli theorem, it has a subsequence $f_{k_n,1}$ converging to some $f\in C(\T)$ uniformly. Similarly, $f_{k_n,1}$ has a subsequence $f'_{k_n,2}$ converging to some $g\in C(\T)$ uniformly; it follows that $f'=g$. Iterating shows that $f\in C^{\infty}(\T)$ and $f_{k_n,n}\to f$ in every semi-norm. The unit ball is thus compact, and we have our contradiction.

This argument can be adapted for $A^{\infty}(D)$ as follows: Let $f_n\in A^{\infty}(D)$ and $|f_n|_l<M_l<+\infty$ uniformly on $n$, and for all $l$. The sequence $f_n\in C(\overline D)$ is uniformly bounded and equicontinuous ($f'_n$ is uniformly bounded and $D$ is convex) so it has a subsequence $f_{k_n,1}$ converging uniformly to a continuous function $f$ on $\bar D$. The function is then holomorphic on $D$ hence  $f\in A(D)$. The same argument gives a subsequence $f_{k_n,2}\in C(\overline D)$ with $f'_{k_n,2}\to g$ uniformly on $\overline D$; obviously $f'=g$ on $D$ so $g$ is the continuous extension of $f'$ on $\T$. Thus, $f_{k_n,2}\to f$ in the topology of $A^1(D)$. Iterating shows that $f\in A^{\infty}(D)$ and $f_{k_n,n}\to f$  in every semi-norm defining the topology of $A^{\infty}(D)$. This completes the proof.
\end{proof}

\section{Jordan Domain}\label{JordanDomainSection}

In this section, $\Omega\subseteq \C$ is a fixed Jordan domain (a simply connected region whose boundary $\partial \Omega$ is a Jordan curve) and $\phi:D\to \Omega$ is a fixed Riemann map.\\
 By the Carath\'eodory-Osgood Theorem, $\phi$ extends to a homeomorphism $\phi:\overline D\to \overline{\Omega}$. The homeomorphism $\gamma:\T\to \partial \Omega$, $\gamma=\phi|_{\T}$, is the parameterization of $\partial \Omega$ that we will be using. 

\begin{defn}We define $C^p(\partial \Omega)$ as the space of functions $f:\partial \Omega\to \C$ such that $f\circ \gamma\in C^p(\T)$, and endow it with semi-norms
\begin{equation*}\Big\|\frac{d^lf}{d\theta^l}\Big\|_{\infty}=\Big\|\frac{d^l(f\circ \gamma)}{d\theta^l}\Big\|_{\infty, \T}\text{ , }0\le l\le p\text{ , }l\in \N\end{equation*}
It is a Fr\'echet space for all $p$ and a Banach space for $p<+\infty$. By definition, $C^p(\partial \Omega)$ is isometrically isomorphic to $C^p(\T)$, via the map $f\mapsto f\circ \gamma$.

$A^p(\Omega)$ is defined as the space of holomorphic functions $f$ on $\Omega$, whose derivatives $f^{(l)}$, $0\le l\le p$, $l\in \N$, extend continuously over $\overline{\Omega}$. Its topology is defined by the semi-norms
\begin{equation*}\|f^{(l)}\|_{\infty, \Omega}=\|f^{(l)}\|_{\infty, \partial\Omega}\text{ , }0\le l\le p\text{ , }l\in \N\end{equation*}
and it is a Fr\'echet space for all $p$ and a Banach space for $p<+\infty$. By definition, $A(\Omega)=A^0(\Omega)$ is isometrically isomorphic to $A(D)$, via the map $f\mapsto f\circ \phi$. 
In Theorem \ref{ApOfOmegaAndDisc} below, we shall give sufficient conditions under which $A^p(\Omega)\approx A^p(D)$ for $0<p<+\infty$. The case $p=+\infty$ is open; see Question \ref{QuestionInfinity} below.

We define $A^p_0(\hat{\C}\setminus \overline{\Omega})$ analogously, with the additional condition that $\lim_{z\to \infty}f(z)=0$.

 Finally, we define
\begin{equation*}\overline{A_0^p(\Omega)}=\{\bar f: f\in A^p(\Omega)\text{ , }f(\phi(0))=0\}\end{equation*}
is topologized by semi-norms
\begin{equation*}\|\big(\overline f\big)^{(l)}\|_{\infty, \Omega}=\|\big(\overline f\big)^{(l)}\|_{\infty, \partial\Omega}\text{ , }0\le l\le p\text{ , }l\in \N\end{equation*}
and the conjugation map $ \overline{A_0^p(\Omega)}\to A_0^p(\Omega)$ becomes an isometric isomorphism.\end{defn}

The decomposition in Theorem \ref{CInfinityDecomposition} is the trickiest to generalize in this setting, so we postpone this until the end of this section, and instead proceed with extending the decomposition in Theorem \ref{CInfinityConjugateDisc}:

\begin{thm}\label{ConjugateJordanDomain}If the Riemann map $\phi:D\to \Omega$ is in $A^{\infty}(D)$ and $\phi'(z)\neq 0$ for all $z\in \T$, then every $f\in C^{\infty}(\partial \Omega)$ has a unique decomposition as $f=g+\bar h$ with $g,h\in A^{\infty}(\Omega)$ and $h(\phi(0))=0$.\end{thm}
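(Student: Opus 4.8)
The plan is to transfer the circle decomposition of Theorem \ref{CInfinityConjugateDisc} to $\Omega$ through the Riemann map $\phi$. The central tool will be a composition principle asserting that, under the stated hypotheses, precomposition with $\phi$ is a linear bijection $A^{\infty}(\Omega)\to A^{\infty}(D)$, $u\mapsto u\circ\phi$, whose inverse is $G\mapsto G\circ\phi^{-1}$. Once this is available, both the existence of the decomposition and its uniqueness follow by pulling $f$ back to $\T$, invoking Theorem \ref{CInfinityConjugateDisc}, and pushing the two summands forward to $\Omega$.

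First I would establish the composition principle. The easy direction is that $u\in A^{\infty}(\Omega)$ implies $u\circ\phi\in A^{\infty}(D)$: the composite is holomorphic on $D$, and since $\phi\in A^{\infty}(D)$, the Fa\`a di Bruno formula writes every complex derivative $(u\circ\phi)^{(l)}$ as a polynomial in $u'\circ\phi,\dots,u^{(l)}\circ\phi$ and $\phi',\dots,\phi^{(l)}$, all of which extend continuously over $\overline D$. The substantive direction is to show $\phi^{-1}\in A^{\infty}(\Omega)$, and this is where the hypothesis $\phi'\neq 0$ on $\T$ enters: together with $\phi'\neq 0$ on $D$ (automatic for a conformal map) it gives $\phi'\neq 0$ on all of $\overline D$. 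Since $\phi'\in A^{\infty}(D)$ is nonvanishing on $\overline D$, its reciprocal $\psi:=1/\phi'$ again lies in $A^{\infty}(D)$, because each derivative of $\psi$ is a rational expression in $\phi',\phi'',\dots$ with powers of $\phi'$ in the denominator, all continuous on $\overline D$. Now $(\phi^{-1})'=\psi\circ\phi^{-1}$, and inductively $(\phi^{-1})^{(l)}$ is a polynomial in $\psi\circ\phi^{-1},\dots,\psi^{(l-1)}\circ\phi^{-1}$; since each $\psi^{(j)}$ extends continuously over $\overline D$ and $\phi^{-1}\colon\overline\Omega\to\overline D$ is a homeomorphism, every $(\phi^{-1})^{(l)}$ extends continuously over $\overline\Omega$, so $\phi^{-1}\in A^{\infty}(\Omega)$. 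Applying the easy direction once more, $G\circ\phi^{-1}\in A^{\infty}(\Omega)$ for every $G\in A^{\infty}(D)$, which completes the principle.

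With this in hand, given $f\in C^{\infty}(\partial\Omega)$ we have $f\circ\gamma\in C^{\infty}(\T)$ by definition, so Theorem \ref{CInfinityConjugateDisc} yields $f\circ\gamma=G+\bar H$ on $\T$ with $G,H\in A^{\infty}(D)$ and $H(0)=0$. I would then set $g:=G\circ\phi^{-1}$ and $h:=H\circ\phi^{-1}$, both in $A^{\infty}(\Omega)$ by the composition principle, and note $h(\phi(0))=H(0)=0$. Since $\phi^{-1}\circ\gamma=\id_{\T}$, we get $g\circ\gamma=G|_{\T}$ and $h\circ\gamma=H|_{\T}$, and because conjugation commutes with precomposition, $(g+\bar h)\circ\gamma=G+\bar H=f\circ\gamma$; as $\gamma$ is a bijection $\T\to\partial\Omega$, this gives $f=g+\bar h$ on $\partial\Omega$.

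For uniqueness, suppose $g+\bar h=0$ on $\partial\Omega$ with $g,h\in A^{\infty}(\Omega)$ and $h(\phi(0))=0$. Composing with $\gamma$ gives $G+\bar H=0$ on $\T$, where $G:=g\circ\gamma$ and $H:=h\circ\gamma$ lie in $A^{\infty}(D)$ by the composition principle and $H(0)=h(\phi(0))=0$; the uniqueness clause of Theorem \ref{CInfinityConjugateDisc} then forces $G=H=0$, whence $g=h=0$. I expect the main obstacle to be the composition principle, and within it the verification that $\phi^{-1}\in A^{\infty}(\Omega)$: the nonvanishing of $\phi'$ on the closed disc is exactly what makes $1/\phi'$ an $A^{\infty}$ function and drives the inductive control of all derivatives of $\phi^{-1}$, while the remaining bookkeeping is a routine Fa\`a di Bruno computation.
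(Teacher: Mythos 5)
Your proposal is correct and follows essentially the same route as the paper: pull $f$ back to $\T$ via $\gamma$, apply Theorem \ref{CInfinityConjugateDisc}, and push the two summands forward through $\phi^{-1}$, which lies in $A^{\infty}(\Omega)$ precisely because $\phi\in A^{\infty}(D)$ and $\phi'\neq 0$ on $\overline D$. You supply more detail than the paper on the composition principle (the Fa\`a di Bruno bookkeeping and the $1/\phi'$ argument, which the paper only asserts), and your uniqueness argument reduces to the circle case rather than invoking directly that $g=-\bar h$ forces both to be constant, but these are minor variations on the same proof.
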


\begin{proof}If $f\in C^{\infty}(\partial \Omega)$ then $f\circ \gamma\in C^{\infty}(\T)$ can be decomposed as $f\circ \gamma=g+\bar h$, for $g,h\in A^{\infty}(D)$ and $h(0)=0$, by Theorem \ref{CInfinityConjugateDisc}. So 
\begin{equation*}f=g\circ \gamma^{-1}+\bar h\circ \gamma^{-1}=g\circ \gamma^{-1}+\overline{h\circ \gamma^{-1}}\end{equation*}
 and $g\circ \gamma^{-1},h\circ \gamma^{-1}\in A^{\infty}(\Omega)$ (their analytic extensions over $\Omega$ are $g\circ \phi^{-1}$ and $h\circ \phi^{-1}$; we have $\phi^{-1}\in A^{\infty}(\Omega)$ because $\phi\in A^{\infty}(\Omega)$ and $\phi'\neq 0$) and $(h\circ \phi^{-1})(\phi(0))=h(0)=0$.

 The uniqueness of the decomposition $f=g+h$ follows from the fact if $f=0$ then $g=\bar h$ for holomorphic $g,h$ on a domain, hence both $g$ and $h$ are constant.\end{proof}

We now examine when $A^{\infty}(\Omega)$ embeds in $C^{\infty}(\partial \Omega)$ in the canonical way, and more generally when $A^p(\Omega)$ embeds in $C^p(\partial \Omega)$, $0\le p\le +\infty$:

\begin{thm}Let $0\le p\le +\infty$ be fixed. The following are equivalent
\begin{itemize}\item[1.] The map $A^p(\Omega)\to C^p(\partial \Omega)$ given by restriction to the boundary is well defined; that is, $f|_{\partial \Omega}\in C^p(\partial \Omega)$ for every $f\in A^p(\Omega)$.
\item[2.] $\gamma\in C^p(\T)$.
\item[3.] $\phi\in A^p(D)$.\end{itemize}\end{thm}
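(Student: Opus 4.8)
The plan is to prove the chain of equivalences $3 \Rightarrow 2 \Rightarrow 1 \Rightarrow 3$, exploiting the fact that $\gamma = \phi|_{\T}$, so that the equivalence $2 \Leftrightarrow 3$ is essentially Proposition \ref{A^pC^pDerivatives} applied to the Riemann map itself. Indeed, $\phi \in A(D) = A^0(D)$ automatically, since $\phi$ extends to a homeomorphism $\overline{D} \to \overline{\Omega}$ by the Carath\'eodory--Osgood Theorem; therefore Proposition \ref{A^pC^pDerivatives} gives $\phi \in A^p(D) \iff \phi|_{\T} = \gamma \in C^p(\T)$ for $p \ge 1$, and the case $p=0$ is the content of the extension theorem. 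This handles $2 \Leftrightarrow 3$ directly, and for $p = +\infty$ one takes the statement for all finite $p$ simultaneously.

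Next I would establish $3 \Rightarrow 1$. Suppose $\phi \in A^p(D)$ and let $f \in A^p(\Omega)$. Then $f \circ \phi$ is holomorphic on $D$, and I claim $f \circ \phi \in A^p(D)$. For $p = 0$ this is clear since $f$ extends continuously to $\overline{\Omega}$ and $\phi$ to $\overline{D}$, so the composition extends continuously to $\overline{D}$. For $p \ge 1$ one differentiates via the chain rule: by the Fa\`a di Bruno formula, $(f \circ \phi)^{(l)}$ is a polynomial in the derivatives $\phi', \phi'', \ldots, \phi^{(l)}$ with coefficients that are derivatives $f^{(j)} \circ \phi$, $1 \le j \le l$. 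Since each $\phi^{(k)}$ ($k \le p$) extends continuously over $\overline{D}$ by hypothesis, and each $f^{(j)}$ ($j \le p$) extends continuously over $\overline{\Omega}$ by the assumption $f \in A^p(\Omega)$, the composition $f^{(j)} \circ \phi$ extends continuously over $\overline{D}$; hence $(f \circ \phi)^{(l)}$ extends continuously over $\overline{D}$ for all $l \le p$, giving $f \circ \phi \in A^p(D)$. By Proposition \ref{A^pC^pDerivatives}, $(f \circ \phi)|_{\T} = f|_{\partial \Omega} \circ \gamma \in C^p(\T)$, which by the definition of $C^p(\partial \Omega)$ means precisely $f|_{\partial \Omega} \in C^p(\partial \Omega)$, i.e.\ the restriction map is well defined.

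For the converse $1 \Rightarrow 3$, I would apply the hypothesis to a single well-chosen test function. Take $f = \id_{\Omega}$, the identity function $f(z) = z$, which obviously belongs to $A^p(\Omega)$ for every $p$ (its higher derivatives vanish and it is entire). If the restriction map is well defined, then $f|_{\partial \Omega} \in C^p(\partial \Omega)$, which unwinds to $f|_{\partial \Omega} \circ \gamma = \gamma \in C^p(\T)$; but $\gamma = \phi|_{\T}$, so this says $\phi|_{\T} \in C^p(\T)$, and then Proposition \ref{A^pC^pDerivatives} yields $\phi \in A^p(D)$, giving statement 3. This single test function closes the loop $1 \Rightarrow 3 \Rightarrow 2 \Rightarrow 1$ (with $2 \Leftrightarrow 3$ already in hand).

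The main obstacle I anticipate is the $p = +\infty$ case of $3 \Rightarrow 1$, where one must confirm that the Fa\`a di Bruno argument is uniform enough to conclude membership in $A^{\infty}(D)$: since $A^{\infty}$ is defined by requiring the conclusion for \emph{every} finite order simultaneously, it suffices that each finite-order derivative extends continuously, which the chain-rule bookkeeping delivers order by order; no uniform-in-$l$ estimate is needed, only that for each fixed $l$ the relevant finitely many derivatives of $\phi$ and $f$ extend. A secondary subtlety is the degenerate behavior where $\phi'$ may vanish; note that statement 3 does \emph{not} require $\phi' \neq 0$, so the forward direction poses no division issues, and the composition argument never inverts $\phi$. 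The reverse direction is insulated from this concern as well, since it only reads off a derivative of $\gamma$ rather than inverting the map.
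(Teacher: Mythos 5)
Your proof is correct, and for the substantive direction it takes a genuinely different route from the paper. The easy parts coincide: both treat $p=0$ as trivial, both get necessity by testing the restriction map on $\id_{\Omega}$, and both delegate $2\Leftrightarrow 3$ to Proposition \ref{A^pC^pDerivatives}. The difference is in proving that $\phi\in A^p(D)$ (equivalently $\gamma\in C^p(\T)$) makes the restriction map well defined. The paper works directly on the circle: it approximates $\gamma$ by the dilates $\gamma_r(e^{i\theta})=\phi(re^{i\theta})$, checks that $\gamma_r\to\gamma$ and $\gamma_r'\to\gamma'$ uniformly, and invokes the classical theorem on uniform convergence of derivatives to conclude that $\widetilde f\circ\gamma$ is $C^1$ with $\frac{d\widetilde f}{d\theta}=(\widetilde{f'}\circ\gamma)\gamma'$, then inducts on $p$. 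You instead stay inside the open disc, use Fa\`a di Bruno to see that every term of $(f\circ\phi)^{(l)}$ extends continuously over $\overline D$ (so $f\circ\phi\in A^p(D)$), and then apply Proposition \ref{A^pC^pDerivatives} a second time to pass from $A^p(D)$ to $C^p(\T)$. Your route is shorter and dodges the limiting argument entirely, at the cost of leaning twice on the imported proposition; the paper's route is more self-contained on this point and has the side benefit of explicitly producing the identity \eqref{DerivativeWithRespectTot}, which is used immediately afterwards to compare the two families of semi-norms on $A^p(\Omega)$. Your chain-rule computation recovers the same identity as its $l=1$ case, so nothing downstream is lost; your handling of $p=+\infty$ (order by order, no uniformity needed) and of possible zeros of $\phi'$ is also correct.
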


\begin{proof}This is trivial for $p=0$ (by continuity), so assume $p\ge 1$. The equivalence of items 2 and 3 is part of Proposition \ref{A^pC^pDerivatives} (see \cite{Vlasis}).

If the restriction map $A^p(\Omega)\to C^p(\partial \Omega)$  is well defined, then $\id_{\Omega}\in A^p(\Omega)$ would restrict to $\id_{\partial \Omega}\in C^p(\partial \Omega)$, namely $\id_{\partial \Omega}\circ \gamma\in C^p(\T)\iff \gamma\in C^p(\T)$.

For the converse, first take $p=1$ and let $f\in A^1(\Omega)$ and $\widetilde f,\widetilde{f'}$ be the extensions of $f,f'$ over the boundary. We want to show that $\widetilde f\circ \gamma$ is $C^1$ smooth.

For $0\le r<1$ set $\gamma_r(e^{i\theta})=\phi(re^{i\theta})$. For convenience, we denote $u'=\frac{du}{d\theta}$ for $u\in C^1(\T)$. By the uniform continuity of $\phi$ we have that $\gamma_r\to \gamma$ uniformly. In addition, $\gamma'_r\to \gamma'$ uniformly, since for $r<1$ we have
\begin{align*}|\gamma'_r(e^{i\theta})-\gamma'(e^{i\theta})|=&|\phi'(re^{i\theta})r-\phi'(e^{i\theta})|\le \\
&|\phi'(re^{i\theta})||r-1|+|\phi'(re^{i\theta})-\phi'(e^{i\theta})|\le \\
&\|\phi'\|_{\infty}(1-r)+|\phi'(re^{i\theta})-\phi'(e^{i\theta})|\end{align*}
which is arbitrarily small for $r$ sufficiently close to $1$, by the uniform continuity of $\phi'$. So $f\circ \gamma_r=\widetilde f\circ \gamma_r\to \widetilde f\circ \gamma$ uniformly, and $(f\circ \gamma_r)'=(f'\circ \gamma_r)\gamma'_r\to (\widetilde{f'}\circ \gamma)\gamma'$ uniformly. By a theorem in real analysis, we conclude that $\widetilde f\circ \gamma$ is differentiable and
\begin{equation}\label{DerivativeWithRespectTot}\frac{d\widetilde f}{d\theta}=(\widetilde{f'}\circ \gamma)\gamma'\end{equation}
An induction on $p$ completes the proof.
\end{proof}

So under the assumption $\gamma\in C^p(\T)$, we have $A^p(\Omega)$ as a subset of $C^p(\T)$. To have it embedded in $C^p(\T)$ as a closed subspace, we additionally need the usual topology on $A^p(\Omega)$ given by the semi-norms
\begin{equation*}\|f^{(l)}\|_{\infty,\Omega}=\|f^{(l)}\|_{\infty,\partial \Omega}\text{ , }0\le l\le p\text{ , }l\in \N\end{equation*}
to agree with the relative topology induced by $C^p(\T)$, i.e., given by the semi-norms
\begin{equation*}\Big\|\frac{d^l(f\circ \gamma)}{d\theta^l}\Big\|_{\infty, \T}\text{ , }0\le l\le p\text{ , }l\in \N\end{equation*}
A sufficient condition is that $\gamma'\neq 0$ (for $p\ge 1$ of course). Indeed, by the equations
\begin{equation*}\frac{df}{d\theta}=\frac{df}{dz}\gamma'\iff \frac{df}{dz}=\frac{df}{d\theta}(\gamma')^{-1} \end{equation*}
we can prove that the two semi-norms for $l=1$ are equivalent (they are equal for $l=0$). This can be done for all $l\le p$, $l\in \N$, by induction, see for instance \cite{Bolkas} .
\begin{thm}\label{ApClosedInCp}Let $p\ge 1$. If $\gamma\in C^p(\T)$ and $\gamma'\neq 0$ then $A^p(\Omega)$ is a closed subspace of $C^p(\partial \Omega)$. For $p=0$, we always have $A(\Omega)=A^0(\Omega)$ as a closed subspace of $C(\partial \Omega)=C^0(\partial \Omega)$.\end{thm}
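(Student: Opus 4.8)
The plan is to view the boundary-restriction map $R\colon A^p(\Omega)\to C^p(\partial\Omega)$, $f\mapsto f|_{\partial\Omega}$, as a \emph{topological} embedding of one complete space into another, and then to invoke the elementary fact that a complete metric subspace of a metric space is closed. The point is that ``closed subspace'' is not just a set-theoretic claim: it requires the relative topology induced from $C^p(\partial\Omega)$ to coincide with the native Fr\'echet topology of $A^p(\Omega)$, and it is exactly here that the hypothesis $\gamma'\neq 0$ enters.

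For $p\ge 1$ I would first record that $R$ is a topological embedding. The hypothesis $\gamma\in C^p(\T)$ guarantees, by the preceding theorem, that $R$ is well defined, and it is manifestly linear and continuous, since each relative semi-norm $\|d^l(f\circ\gamma)/d\theta^l\|_{\infty,\T}$ is dominated by the native semi-norms $\|f^{(j)}\|_{\infty,\partial\Omega}$, $j\le l$, together with derivatives of $\gamma$ (expand $d^l(f\circ\gamma)/d\theta^l$ by the chain rule, using $df/d\theta=(f'\circ\gamma)\gamma'$ as in $\eqref{DerivativeWithRespectTot}$). Injectivity of $R$ follows from the maximum modulus principle applied to the continuous extension of $f$ to $\overline{\Omega}$. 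The nonvanishing condition $\gamma'\neq 0$ supplies the \emph{reverse} bounds: writing $df/dz=(df/d\theta)(\gamma')^{-1}$ and iterating, one controls $\|f^{(l)}\|_{\infty,\partial\Omega}$ by the relative semi-norms up to order $l$, so the two semi-norm families are equivalent for $0\le l\le p$. This is precisely the equivalence recorded in the discussion preceding the theorem (cf.\ \cite{Bolkas}), and it makes $R$ a homeomorphism onto its image.

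With the embedding known to be topological, the conclusion is soft. Since $A^p(\Omega)$ is complete (a Fr\'echet space, and a Banach space for $p<+\infty$) and $R$ is an isomorphism onto $R(A^p(\Omega))$, the image is complete in the relative topology; as $C^p(\partial\Omega)$ is metrizable, this image is closed. Spelled out: if $f_n\in A^p(\Omega)$ with $f_n|_{\partial\Omega}\to g$ in $C^p(\partial\Omega)$, the equivalence of the two semi-norm families forces $(f_n)$ to be Cauchy in $A^p(\Omega)$, so $f_n\to f$ for some $f\in A^p(\Omega)$, and continuity of $R$ gives $g=f|_{\partial\Omega}$. The case $p=0$ is handled separately, and without $\gamma'\neq 0$: the isometric isomorphisms $A(\Omega)\to A(D)$, $f\mapsto f\circ\phi$, and $C(\partial\Omega)\to C(\T)$, $g\mapsto g\circ\gamma$, fit into a commuting square with the two restriction maps (using $\gamma=\phi|_{\T}$), and since $A(D)$ is closed in $C(\T)$ by the closedness proposition of Section \ref{PrelimSection} and an isometric isomorphism carries closed subspaces to closed subspaces, $A(\Omega)$ is closed in $C(\partial\Omega)$.

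I expect the only genuine care to lie in the two-sided semi-norm equivalence for $p\ge 1$: the forward direction is automatic from the chain rule, but the reverse bound is where $\gamma'\neq 0$ is indispensable, since one must divide by powers of $\gamma'$ and control the resulting products by induction on $l$. Once this equivalence is in place, closedness is an immediate consequence of completeness, so the remaining steps are routine.
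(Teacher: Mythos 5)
Your proposal is correct and follows essentially the same route as the paper: both arguments rest on the semi-norm equivalence from the preceding discussion (where $\gamma'\neq 0$ gives the reverse bound) to make the restriction map a topological embedding, and then deduce closedness from the completeness of $A^p(\Omega)$. The extra details you supply (injectivity via the maximum principle, the explicit Cauchy-sequence argument, and the commuting square for $p=0$) merely flesh out what the paper leaves implicit.
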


\begin{proof}The image of $A^p(\Omega)$ in $C^p(\partial \Omega)$ under the restriction map, $f\mapsto f|_{\partial \Omega}$, is isomorphic to the complete space $A^p(\Omega)$ by the preceding discussion. Therefore, $A^p(\Omega)$ is a closed subspace of $C^p(\partial \Omega)$\end{proof}
The analogous statement is true for $\overline{A^p(\Omega)}$. Therefore, by Theorem \ref{ConjugateJordanDomain} and Proposition \ref{Functional2}, we have that

\begin{cor}If $\gamma\in C^{\infty}(\T)$ and $\gamma'\neq 0$ then $C^{\infty}(\partial \Omega)=A^{\infty}(\Omega)\oplus \overline{A_0^{\infty}(\Omega)}$.\end{cor}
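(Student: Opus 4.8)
The plan is to reduce the statement to the three ingredients already assembled in this section, exactly mirroring the corollary that followed Theorem~\ref{CInfinityConjugateDisc} in the disc case. First I would check that the hypotheses $\gamma\in C^{\infty}(\T)$ and $\gamma'\neq 0$ are precisely the hypotheses of Theorem~\ref{ConjugateJordanDomain}. Indeed, $\gamma=\phi|_{\T}$, so by Proposition~\ref{A^pC^pDerivatives} (intersecting over all finite orders) we have $\gamma\in C^{\infty}(\T)\iff\phi\in A^{\infty}(D)$; moreover the relation $\gamma'(e^{i\theta})=\phi'(e^{i\theta})\,ie^{i\theta}$ noted after that proposition shows, since $ie^{i\theta}\neq 0$, that $\gamma'\neq 0$ on $\T$ is the same as $\phi'\neq 0$ on $\T$. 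Hence Theorem~\ref{ConjugateJordanDomain} applies and furnishes, for every $f\in C^{\infty}(\partial\Omega)$, a unique decomposition $f=g+\bar h$ with $g,h\in A^{\infty}(\Omega)$ and $h(\phi(0))=0$; equivalently $g\in A^{\infty}(\Omega)$ and $\bar h\in\overline{A_0^{\infty}(\Omega)}$.

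Next I would read off the two conclusions carried by this decomposition. Its existence is exactly the algebraic identity $C^{\infty}(\partial\Omega)=A^{\infty}(\Omega)+\overline{A_0^{\infty}(\Omega)}$, while its uniqueness forces $A^{\infty}(\Omega)\cap\overline{A_0^{\infty}(\Omega)}=\{0\}$: if $f$ lies in both subspaces, then $f=f+\bar 0$ and $f=0+\bar h$ (where $\bar h=f$, $h(\phi(0))=0$) are two admissible decompositions of the same element, so uniqueness gives $f=0$. Thus the two subspaces sum to the whole space and meet trivially, and it remains only to upgrade this algebraic splitting to a topological one via Proposition~\ref{Functional2}.

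To that end I would verify that both summands are closed in the Fr\'echet space $C^{\infty}(\partial\Omega)$. For $A^{\infty}(\Omega)$ this is Theorem~\ref{ApClosedInCp} with $p=+\infty$, whose hypotheses $\gamma\in C^{\infty}(\T)$ and $\gamma'\neq 0$ are in force. For $\overline{A_0^{\infty}(\Omega)}$ I would invoke the closedness of $\overline{A^{\infty}(\Omega)}$ recorded after Theorem~\ref{ApClosedInCp}, together with the observation that $\overline{A_0^{\infty}(\Omega)}$ is the kernel inside $\overline{A^{\infty}(\Omega)}$ of the continuous linear functional $\bar h\mapsto \overline{h(\phi(0))}$ (evaluation at the interior point $\phi(0)$ is dominated by the $l=0$ semi-norm); being closed in $\overline{A^{\infty}(\Omega)}$, which is in turn closed in $C^{\infty}(\partial\Omega)$, it is closed in $C^{\infty}(\partial\Omega)$ by transitivity. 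With both subspaces closed, the equivalence of items 1 and 2 in Proposition~\ref{Functional2} yields $C^{\infty}(\partial\Omega)=A^{\infty}(\Omega)\oplus\overline{A_0^{\infty}(\Omega)}$.

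The argument is essentially a matter of citing the earlier results in the correct order, so there is no single hard computation. The one point demanding genuine care is the closedness of the summands, i.e.\ that the intrinsic topology of $A^{\infty}(\Omega)$ given by the semi-norms $\|f^{(l)}\|_{\infty,\partial\Omega}$ agrees with the subspace topology inherited from $C^{\infty}(\partial\Omega)$. This is exactly where the hypothesis $\gamma'\neq 0$ is indispensable: it is what makes the two families of semi-norms equivalent, as in the discussion preceding Theorem~\ref{ApClosedInCp}, and hence what guarantees the completeness of the summands that Proposition~\ref{Functional2} requires.
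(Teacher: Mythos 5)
Your proposal is correct and follows essentially the same route as the paper: Theorem~\ref{ConjugateJordanDomain} supplies the algebraic decomposition and trivial intersection, Theorem~\ref{ApClosedInCp} (and its conjugate analogue) supplies closedness of the summands, and Proposition~\ref{Functional2} upgrades this to a topological direct sum. Your extra care in translating $\gamma\in C^{\infty}(\T)$, $\gamma'\neq 0$ into $\phi\in A^{\infty}(D)$, $\phi'\neq 0$ on $\T$, and in deducing closedness of $\overline{A_0^{\infty}(\Omega)}$ from that of $\overline{A^{\infty}(\Omega)}$ via the evaluation functional at $\phi(0)$, only makes explicit what the paper leaves implicit.
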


We shall prove that no such decomposition can occur for $p<+\infty$, under some mild conditions on $\Omega$. Precisely, we define the family of Jordan domains $\mathcal F$ consisting of all $\Omega$ with one of the following properties:
\begin{itemize}\item[1.] $\Omega$ has rectifiable boundary.
\item[2.] $\Omega$ is star-like.
\item[3.] $\Omega$ is bounded by the graph of a continuous function $s:[0,1]\to \R$ and the horizontal axis positioned at height $c<\min s$:
\begin{equation*}\Omega=\{(x,y)\in \C: 0< x< 1\text{ , }c<y<s(x)\}\end{equation*}
\item[4.] For any $\epsilon>0$ there is a $\delta>0$ so that whenever $z,w\in \Omega$, $|z-w|<\delta$, there is a curve in $\Omega$ connecting them with length less than $\epsilon$.
\item[5.] There is a $C>0$ such that for any two points $z,w\in \Omega$ there is a curve in $\Omega$ connecting them with length less than $C|z-w|$. This is the so called  ''interior chord arc condition'' (\cite{Chord Arc}).
\item[6.] The Riemann map $\phi:D\to \Omega$ is Lipschitz continuous. This is equivalent to Lipschitz continuity on $\overline D$, and also equivalent to $\phi'$ being bounded on $D$ (because $D$ is convex).
\end{itemize}

We remark that each one of conditions 5 and 6 imply condition 4 (if $\phi$ is Lipschitz then we can use curves $\phi([\phi^{-1}(z),\phi^{-1}(w)])$ to connect $z,w$), while condition 6 clearly implies condition 1.

\begin{que}Do conditions 4 or 5 imply condition 1 ?\end{que}

\begin{thm}If $\Omega\in \mathcal F$ then every function in $A(\Omega)$ has an antiderivative in $A^1(\Omega)$; in other words, the integration operator on $\Omega$ maps $A(\Omega)$ to itself.\end{thm}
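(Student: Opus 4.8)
The plan is to produce the antiderivative explicitly and reduce the whole statement to controlling its continuity up to the boundary. Since $\Omega$ is simply connected and $f\in A(\Omega)$ is holomorphic, it has a holomorphic primitive $F(z)=\int_{z_0}^{z}f(\zeta)\,d\zeta$, the integral being independent of the path chosen inside $\Omega$. As $f$ is continuous on the compact set $\overline\Omega$ it is bounded, and $F'=f$ already extends continuously over $\overline\Omega$; hence the only thing left to prove is that $F$ itself extends continuously, i.e.\ that $F$ is uniformly continuous on $\Omega$. The estimate I would rely on is $|F(z)-F(w)|\le \|f\|_{\infty,\Omega}\,\ell_\Omega(z,w)$, where $\ell_\Omega(z,w)$ denotes the infimum of the lengths of curves in $\Omega$ joining $z$ and $w$; this follows at once from path--independence by integrating $f$ over such a curve and taking the infimum.

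With this estimate in hand, condition 4 is exactly what is needed: given $\varepsilon>0$ it produces a $\delta>0$ so that points at Euclidean distance $<\delta$ are joined inside $\Omega$ by a curve of length $<\varepsilon$, whence $|F(z)-F(w)|<\|f\|_{\infty,\Omega}\,\varepsilon$ and $F$ is uniformly continuous. It therefore suffices to show that the remaining geometric hypotheses feed into condition 4. Conditions 5 and 6 were already observed to imply condition 4. For condition 3 I would use the \emph{uniform} continuity of $s$ on $[0,1]$: this forbids narrow deep notches in the graph, so two nearby points can be joined by a short ``down--across--up'' path (descend slightly below $\min_{[x_1,x_2]}s$, move horizontally, ascend), whose vertical excursions are controlled by the modulus of continuity of $s$ and whose horizontal part is shorter than $|x_1-x_2|$. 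Condition 2 is entirely analogous after passing to polar coordinates: a star-like Jordan domain has a continuous radial profile $\rho(\theta)\ge m>0$, so away from the centre one joins nearby points by a radial--arc--radial path controlled by the modulus of continuity of $\rho$, while points close to the centre lie in the ball $B(0,m)\subseteq\Omega$ and are joined by a straight segment.

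The genuinely different, and in my view hardest, case is condition 1, rectifiability, because rectifiability does \emph{not} imply condition 4 --- a thin ``hairpin'' slit produces a Jordan domain with rectifiable boundary in which Euclidean-close points on the two arms are geodesically far --- so the length estimate above is unavailable. Here I would transfer the problem to the disc. By the classical characterization of rectifiability through the Riemann map, $\partial\Omega$ rectifiable is equivalent to $\phi'\in H^{1}(D)$. Setting $G=F\circ\phi$, we get $G'=(f\circ\phi)\,\phi'$, a product of the bounded function $f\circ\phi\in A(D)$ with $\phi'\in H^{1}$, hence $G'\in H^{1}$. Now I would invoke that primitives of $H^{1}$ functions belong to the disc algebra: by Hardy's inequality $\sum_{n\ge 1}|\widehat G(n)|=\sum_{k\ge 0}|\widehat{G'}(k)|/(k+1)<\infty$, so the Taylor series of $G$ converges absolutely and uniformly on $\overline D$ and $G\in A(D)$. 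Finally $F=G\circ\phi^{-1}$ is continuous on $\overline\Omega$ because $\phi^{-1}:\overline\Omega\to\overline D$ is continuous by the Carath\'eodory--Osgood Theorem, giving $F\in A^{1}(\Omega)$.

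In summary, the main obstacle is not the geodesic conditions, for which the scheme is uniform, but the rectifiable case, which forces the $H^{1}$/Hardy-inequality detour; note that condition 6 needs no separate treatment since it implies both condition 4 and condition 1. The routine work I am suppressing is the careful construction of the short connecting paths under conditions 2 and 3 (bookkeeping near the corners, near the vertical sides, and near the centre) and the verification that each constructed curve indeed stays inside $\Omega$.
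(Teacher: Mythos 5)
Your handling of condition 4 is exactly the paper's argument (the paper proves only condition 4 directly, noting that 5 and 6 imply it, and cites \cite{Nestoridis} for conditions 1--3), and two of your additions are genuinely correct, self-contained alternatives to that citation. The reduction of condition 3 to condition 4 works: since $s$ is uniformly continuous on $[0,1]$, the depth of any dip of $s$ over $[x_1,x_2]$ is at most $\omega_s(|x_1-x_2|)$, so the down--across--up path has length $\le |x_1-x_2|+2\omega_s(|x_1-x_2|)+|y_1-y_2|+o(1)$, uniformly. The rectifiable case via $\phi'\in H^1$, $G'=(f\circ\phi)\phi'\in H^1$, and Hardy's inequality giving $\sum_{n\ge 1}|\widehat G(n)|<\infty$ is also correct and is a nice way to make that case explicit.

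The star-like case, however, has a genuine gap: the radial profile $\rho(\theta)=\sup\{r:re^{i\theta}\in\Omega\}$ of a star-like Jordan domain need \emph{not} be continuous, so the ``radial--arc--radial path controlled by the modulus of continuity of $\rho$'' is not available. For a concrete counterexample take $\rho(\theta)=\frac12+\theta/(4\pi)$ for $\theta\in[0,2\pi)$: the set $S=\{re^{i\theta}: r<\rho(\theta)\}$ is open and star-like about $0$, its boundary is the Jordan curve formed by the spiral arc $\rho(\theta)e^{i\theta}$ together with the radial segment $[\frac12,\frac34]$, and $\rho$ jumps at $\theta=0$. Your sketch therefore does not establish condition 2 as written (and whether star-likeness implies condition 4 at all is not settled by your argument). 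The standard repair bypasses condition 4 entirely: with the star center at $z_0$, write $F(z)=(z-z_0)\int_0^1 f\bigl(z_0+t(z-z_0)\bigr)\,dt$; since $\overline\Omega$ is star-like about $z_0$ and $f$ is continuous on the compact set $\overline\Omega$, the integrand is jointly continuous on $[0,1]\times\overline\Omega$ and $F$ extends continuously to $\overline\Omega$ directly. A smaller point: your parenthetical claim that rectifiability does not imply condition 4 ``because of a thin hairpin slit'' is unjustified --- a single hairpin of positive width only obstructs short connections down to the scale of that width, whereas condition 4 concerns all sufficiently small scales, and shrinking the widths while keeping the depths bounded below forces infinite boundary length. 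This remark is immaterial to the correctness of your rectifiable-case argument, but it should not be presented as a fact.
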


\begin{proof}We refer the reader to \cite{Nestoridis} for the cases of conditions 1,2 and 3. We shall only treat condition 4, which is weaker than conditions 5 and 6. 

Assume that for any $\epsilon>0$ there is a $\delta>0$ such that whenever $z,w\in \Omega$, $|z-w|<\delta$, then there is a rectifiable curve $\delta_{z,w}$ in $\Omega$ connecting $z,w$ with length $\ell(\delta_{z,w})$ less than $\epsilon$. For arbitrary curves $\gamma_z$ in $\Omega$ starting from a fixed $z_0\in \Omega$ to $z$, the function $F:D\to \C$ given by $F(z)=\int_{\gamma_z}f(\zeta)\, d\zeta$ is an antiderivative of $f$ in $\Omega$, and is independent of the choice of curves $\gamma_z$.  For $z,w\in \Omega$ and $|z-w|<\delta$ we have

\begin{equation*}|F(z)-F(w)|=\Big|\int_{\delta_{z,w}}f(\zeta)d\zeta\Big|\le \|f\|_{\infty}\ell(\delta_{z,w})<\|f\|_{\infty}\epsilon\end{equation*}
which can become arbitrarily small as $\|f\|_{\infty}<+\infty$. Therefore, $F$ is uniformly continuous on $\Omega$, hence extends continuously over $\overline{\Omega}$. Because $F'=f\in A(\Omega)$, we conclude that $F\in A^1(\Omega)$.\end{proof}

\begin{rem}This proof also yields that if $\Omega$ has property 4, then every function in $H^{\infty}(\Omega)$ (a function holomorphic and bounded on $\Omega$) has an antiderivative in $A(\Omega)$. The integration operator maps $H^{\infty}(\Omega)$ to itself if and only if there is an $M\in (0,+\infty)$ so that any two points in $\Omega$ can be joined by a curve in $\Omega$ with length at most $M$ (\cite{HInfinity}).\end{rem}

\begin{thm}\label{ApOfOmegaAndDisc}If the integration operator on $\Omega$ maps $A(\Omega)$ to itself and $p<+\infty$, then $A^p(\Omega)\approx A^{p+1}(\Omega)$ and $A^p(\Omega)\approx A^p(D)$. In particular, the conclusion holds for $\Omega\in \mathcal F$.\end{thm}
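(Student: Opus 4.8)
The plan is to mimic the proof of Theorem \ref{IsomorphismsDisc}, replacing the disc-specific map $\Phi(f)=f'zi+f(0)$ by a map adapted to $\Omega$, and then to chain the resulting isomorphisms with the isometric identification $A^0(\Omega)\approx A^0(D)$ recorded in the definitions. Throughout I would use only the stated hypothesis on the integration operator, not any boundary regularity of $\gamma$ or $\phi$.

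First I would upgrade the hypothesis. The assumption is only that the integration operator sends $A(\Omega)=A^0(\Omega)$ into itself, but I claim it then sends $A^p(\Omega)$ into $A^{p+1}(\Omega)$ for every finite $p$: if $g\in A^p(\Omega)$ and $G$ is an antiderivative, then $G$ extends continuously over $\overline{\Omega}$ by the base hypothesis (applied to $g\in A^0(\Omega)$), while $G^{(l)}=g^{(l-1)}$ for $1\le l\le p+1$ extends continuously because $g\in A^p(\Omega)$; hence $G\in A^{p+1}(\Omega)$. Next, fix $z_0=\phi(0)\in\Omega$ and define $\Phi:A^{p+1}(\Omega)\to A^p(\Omega)$ by $\Phi(f)(z)=f'(z)(z-z_0)+f(z_0)$, the analogue of $\eqref{AA^1Isomorphism1}$. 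It is continuous and linear, and injective: evaluating at $z_0$ gives $\Phi(f)(z_0)=f(z_0)$, so $\Phi(f)=0$ forces $f(z_0)=0$ and then $f'(z)(z-z_0)\equiv 0$, whence $f\equiv 0$. For surjectivity, given $g\in A^p(\Omega)$ I would form the difference quotient $r(z)=(g(z)-g(z_0))/(z-z_0)$ (with the removable singularity at $z_0$ filled in by $r(z_0)=g'(z_0)$), take its antiderivative $R$ with $R(z_0)=0$, which lies in $A^{p+1}(\Omega)$ by the first step, and set $f=R+g(z_0)$; then $\Phi(f)(z)=r(z)(z-z_0)+g(z_0)=g(z)$, exactly as in $\eqref{AA^1Isomorphism2}$. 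The Open Mapping Theorem then makes $\Phi$ an isomorphism, so $A^{p+1}(\Omega)\approx A^p(\Omega)$, and iterating gives $A^p(\Omega)\approx A^0(\Omega)$.

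The one step requiring genuine care, and the main obstacle, is the claim $r\in A^p(\Omega)$. Since $r$ is holomorphic on $\Omega$ (the singularity at the interior point $z_0$ being removable), only the continuous extension of $r^{(l)}$ to $\partial\Omega$ is at issue, and there $z-z_0$ is bounded away from $0$. Differentiating the identity $r(z)(z-z_0)=g(z)-g(z_0)$ and using that $z-z_0$ has vanishing derivatives of order $\ge 2$ yields the recursion $r^{(l)}=(g^{(l)}-l\,r^{(l-1)})/(z-z_0)$; by induction each $r^{(l)}$ with $l\le p$ is a finite combination of the boundary-continuous functions $g^{(j)}$ divided by powers of $z-z_0$, and so extends continuously to $\partial\Omega$. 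Everything else is formal.

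Finally I would assemble the two conclusions. The relation $A^p(\Omega)\approx A^{p+1}(\Omega)$ is the isomorphism just constructed. For $A^p(\Omega)\approx A^p(D)$, I chain $A^p(\Omega)\approx A^0(\Omega)\approx A^0(D)\approx A^p(D)$, where the middle isomorphism is the isometric identification $f\mapsto f\circ\phi$ from the definitions, and the outer ones come from iterating the present construction on $\Omega$ and Theorem \ref{IsomorphismsDisc} on $D$. The ``in particular'' clause is then immediate from the preceding theorem, which guarantees that every $\Omega\in\mathcal F$ has the required property of the integration operator.
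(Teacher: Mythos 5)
Your proposal is correct and follows essentially the same route as the paper: the paper translates so that $0\in\Omega$ and uses $\Phi(f)(z)=f'(z)zi+f(0)$, which is your map up to an affine change of variable, and then chains $A^p(\Omega)\approx A(\Omega)\approx A(D)\approx A^p(D)$ exactly as you do. The details you supply (upgrading the integration-operator hypothesis to $A^p\to A^{p+1}$ and the Leibniz recursion showing the difference quotient lies in $A^p(\Omega)$) are precisely the ones the paper leaves implicit by referring back to the proof of Theorem \ref{IsomorphismsDisc}.
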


\begin{proof}The first statement follows exactly as in the case of the disc: After a translation, we may assume $0\in \Omega^{\circ}$. The map $\Phi:A^{p+1}(\Omega)\to A^p(\Omega)$ given by
\begin{equation*}\Phi(f)(z)=f'(z)zi+f(0)\end{equation*}
is an isomorphism, the proof of which is similar to the second part of the proof of Theorem \ref{IsomorphismsDisc}. In addition, $A^p(\Omega)\approx A(\Omega)\approx A(D)\approx A^p(D)$ where the second isomorphism is given by $f\mapsto f\circ \phi$.
\end{proof}

As $C^{\infty}(\T)\approx C^{\infty}(\partial \Omega)$, $C^{\infty}(\partial \Omega)$ has no norm inducing its usual topology by Theorem \ref{CInfinityNoNorm}.
 
\begin{thm} $A^{\infty}(\Omega)$ has no norm inducing its usual topology, if $\Omega$ has property 4 (or properties 5,6). \end{thm}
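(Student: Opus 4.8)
The plan is to adapt the compactness argument used for $A^{\infty}(D)$ in Theorem \ref{CInfinityNoNorm}, with property 4 playing the role that the convexity of $D$ played there. Note that one cannot simply transport non-normability across an isomorphism $A^{\infty}(\Omega)\approx A^{\infty}(D)$, since Theorem \ref{ApOfOmegaAndDisc} produces such isomorphisms only for finite $p$; indeed the $p=+\infty$ case is left open. So I would argue directly.

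Suppose, for contradiction, that there is a norm $\|\cdot\|$ inducing the usual topology of $A^{\infty}(\Omega)$, and let $f_n$ satisfy $\|f_n\|\le 1$. Exactly as in Theorem \ref{CInfinityNoNorm}, for each $l$ the set $\{f:|f|_l<1\}$ is a neighborhood of $0$, hence contains a ball $\{f:\|f\|<r_l\}$; scaling then gives a uniform bound $|f_n|_l=\|f_n^{(l)}\|_{\infty,\Omega}\le M_l$ for all $n$. It therefore suffices to extract a subsequence converging to some $f\in A^{\infty}(\Omega)$ in every semi-norm, for then the closed unit ball of $\|\cdot\|$ is sequentially, hence, compact, contradicting that $A^{\infty}(\Omega)$ is an infinite-dimensional normed space (it contains all polynomials).

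The crux is equicontinuity, and this is precisely where property 4 enters. Fix $l$. Given $\epsilon>0$, choose $\delta>0$ as in property 4, so that any $z,w\in \Omega$ with $|z-w|<\delta$ are joined by a curve $\delta_{z,w}\subseteq \Omega$ of length $\ell(\delta_{z,w})<\epsilon$. Since $f_n^{(l+1)}=(f_n^{(l)})'$ on $\Omega$, integrating along $\delta_{z,w}$ yields
\begin{equation*}|f_n^{(l)}(z)-f_n^{(l)}(w)|=\Big|\int_{\delta_{z,w}}f_n^{(l+1)}(\zeta)\,d\zeta\Big|\le M_{l+1}\,\ell(\delta_{z,w})<M_{l+1}\,\epsilon,\end{equation*}
so $\{f_n^{(l)}\}$ is uniformly equicontinuous on $\Omega$ with a common modulus. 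Because each $f_n^{(l)}$ extends continuously to $\overline{\Omega}$, this modulus is inherited on $\overline{\Omega}$ by approximating boundary points from inside. Combined with the uniform bound $|f_n|_l\le M_l$, the Arzel\'a--Ascoli theorem then applies on the compact set $\overline{\Omega}$ at each level $l$.

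Finally I would run the diagonal extraction as in Theorem \ref{CInfinityNoNorm}: first pass to a subsequence along which $f_n\to f$ uniformly on $\overline{\Omega}$, with $f\in A(\Omega)$ as a uniform limit of holomorphic functions; then a further subsequence along which $f_n'\to g$ uniformly, which forces $g=f'$ and $f\in A^1(\Omega)$; iterating and diagonalizing produces $f\in A^{\infty}(\Omega)$ with $f_{k_n,n}\to f$ in every semi-norm, giving the desired compactness and the contradiction. The main obstacle is the equicontinuity step just described: without convexity one genuinely needs property 4 to convert the bound on $f_n^{(l+1)}$ into control of the oscillation of $f_n^{(l)}$, and one must take some care in transferring the resulting modulus of continuity from $\Omega$ to its closure.
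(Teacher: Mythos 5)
Your proof is correct and follows essentially the same route as the paper: both arguments adapt the compactness proof of Theorem \ref{CInfinityNoNorm}, using property 4 exactly as you do to convert the uniform bound on the next derivative into uniform equicontinuity on $\Omega$ (hence on $\overline{\Omega}$), and then apply Arzel\'a--Ascoli with a diagonal extraction. Your preliminary remark that non-normability cannot simply be transported through an isomorphism with $A^{\infty}(D)$ is also consistent with the paper, which leaves the $p=+\infty$ isomorphism as an open question.
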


\begin{proof}This is a straightforward adaptation of the proof of Theorem \ref{CInfinityNoNorm}. The only difference is in showing that uniform boundedness of $f'_n$ on $\overline{\Omega}$ implies equicontinuity of $f_n$ on $\overline{\Omega}$. Here is where the condition 4 comes into play: For arbitrary $\epsilon>0$ take $\delta>0$ so that any two $z,w\in \Omega$ less than $\delta$ apart can be connected by a curve $\delta_{z,w}$ in $\Omega$ with length less than $\epsilon/M$, $M$ being the uniform bound on $f'_n$. Then,
\begin{equation*}|f_n(z)-f_n(w)|=\Big|\int_{\delta_{z,w}}f_n'(\zeta)d\zeta\Big|\le M\ell(\delta_{z,w})<\epsilon\end{equation*}
where by $\ell(\delta_{z,w})$ we denote the length of the curve $\delta_{z,w}$. So we have uniform equicontinuity on $\Omega$, which in turn implies equicontinuity on $\overline{\Omega}$. The rest of the proof is similar to the proof of Theorem \ref{CInfinityNoNorm}.
\end{proof}

\begin{que}\label{QuestionInfinity}Under what assumptions on $\Omega$ are the spaces $A^{\infty}(D)$ and $A^{\infty}(\Omega)$ isomorphic?\end{que}

We now generalize Theorem \ref{A^pNotComplementalC^pCircle}

\begin{thm}\label{ApNotComplemental}If either $p=0$ or $1\le p<+\infty$ and the integration operator maps $A(\Omega)$ to itself, then $A^p(\Omega)$ is not isomorphic to any complemented subspace of $C^p(\partial \Omega)$. In particular, this holds for $\Omega\in \mathcal F$ and $p<+\infty$.\end{thm}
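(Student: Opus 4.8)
The plan is to reduce everything to the circle case, Theorem \ref{A^pNotComplementalC^pCircle}, by transporting the complementation hypothesis through two isomorphisms: the isometric identification of $C^p(\partial\Omega)$ with $C^p(\T)$, and an isomorphism $A^p(\Omega)\approx A^p(D)$. No fresh induction on $p$ is needed here, since the induction is already carried out inside Theorem \ref{A^pNotComplementalC^pCircle}.

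First I would record the two structural isomorphisms. By the definition of $C^p(\partial\Omega)$, the map $T\colon C^p(\partial\Omega)\to C^p(\T)$, $T(f)=f\circ\gamma$, is an isometric isomorphism for every $p$. For the $A^p$ spaces, when $p=0$ the map $f\mapsto f\circ\phi$ is an isometric isomorphism $A(\Omega)\to A(D)$, again by definition; and when $1\le p<+\infty$, the standing hypothesis that the integration operator maps $A(\Omega)$ into itself lets me invoke Theorem \ref{ApOfOmegaAndDisc} to obtain $A^p(\Omega)\approx A^p(D)$. This is the only place the hypothesis on $\Omega$ is used.

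Next I would argue by contradiction. Suppose $A^p(\Omega)$ is isomorphic to some complemented subspace $K$ of $C^p(\partial\Omega)$, so that there is a continuous projection $P\colon C^p(\partial\Omega)\to K$. Applying the isomorphism $T$, the image $T(K)$ is a subspace of $C^p(\T)$, the operator $T P T^{-1}$ is a continuous projection of $C^p(\T)$ onto $T(K)$, and $T|_K\colon K\to T(K)$ is an isomorphism. Hence $T(K)$ is complemented in $C^p(\T)$ and $T(K)\approx K\approx A^p(\Omega)\approx A^p(D)$. Thus $A^p(D)$ would be isomorphic to a complemented subspace of $C^p(\T)$, contradicting Theorem \ref{A^pNotComplementalC^pCircle}. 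The final assertion for $\Omega\in\mathcal F$ follows because Theorem \ref{ApOfOmegaAndDisc} guarantees that the integration operator maps $A(\Omega)$ into itself for such $\Omega$, so the hypothesis of the present theorem is met.

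The proof itself is short; the only genuine content is the isomorphism $A^p(\Omega)\approx A^p(D)$, which rests on Theorem \ref{ApOfOmegaAndDisc} and hence on the integration-operator hypothesis. The main obstacle is therefore conceptual rather than computational: one must verify that the property ''being isomorphic to a complemented subspace of'' is genuinely preserved when the ambient space $C^p(\partial\Omega)$ is replaced by the isomorphic space $C^p(\T)$, and note that the isometric nature of $T$ is incidental, since only an isomorphism of the ambient spaces is required to transport the projection.
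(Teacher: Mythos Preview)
Your proposal is correct and follows essentially the same route as the paper: transport an assumed complementation $C^p(\partial\Omega)=K\oplus L$ with $K\approx A^p(\Omega)$ through the isometric isomorphism $f\mapsto f\circ\gamma$ to $C^p(\T)$, then use $A^p(\Omega)\approx A^p(D)$ (by definition for $p=0$, via Theorem~\ref{ApOfOmegaAndDisc} for $1\le p<+\infty$) to contradict Theorem~\ref{A^pNotComplementalC^pCircle}. The only cosmetic difference is that the paper treats $p=0$ and $p\ge 1$ in separate paragraphs while you handle them uniformly; one small citation quibble is that for $\Omega\in\mathcal F$ the fact that the integration operator preserves $A(\Omega)$ is the theorem \emph{preceding} Theorem~\ref{ApOfOmegaAndDisc}, not Theorem~\ref{ApOfOmegaAndDisc} itself.
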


\begin{proof}First take $p=0$ and assume there are $K,L$ so that $C(\partial \Omega)=K\oplus L$ and $A(\Omega)\approx K$. We apply the isomorphism $C(\partial \Omega)\to C(\T)$ (given by $f\mapsto f\circ \gamma$) on $C(\partial \Omega)=K\oplus L$  to obtain that $C(\T)=K'\oplus L'$ for $K',L'$ isomorphic to $K.L$ respectively. But then $K'\approx K\approx A(\Omega)\approx A(D)$ and $K'$ is complemented in $C(\T)$, contradicting Theorem \ref{A^pNotComplementalC^pCircle} (\cite{Wojtaszczyk}).

Now take $\Omega\in \mathcal F$, $p<+\infty$, and assume that $C^p(\partial \Omega)=K\oplus L$ for some $K,L$ with $A^p(\Omega)\approx K$. As before, we have $C^p(\T)\approx K'\oplus L'$ for $K'\approx K$. The space $K$ is isomorphic to $A^p(\Omega)$, which by Theorem \ref{ApOfOmegaAndDisc} is isomorphic to $A^p(D)$; consequently, $A^p(D)$ is isomorphic to the complemented subspace $K'$ of $C^p(\T)$, contradicting Theorem \ref{A^pNotComplementalC^pCircle}.\end{proof}

\begin{prop}\label{TrivialIntersectionApOmega}If $\partial \Omega$ has continuous analytic capacity $0$ then $A^p(\Omega)\cap A^p_0(\hat{\C}\setminus \overline{\Omega})$ is trivial for all $p$.\end{prop}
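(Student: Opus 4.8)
The plan is to recognize a function in the intersection as (the boundary values of) a single function defined on the whole Riemann sphere that is holomorphic off the curve $\partial\Omega$, and then to exploit the removability of sets of continuous analytic capacity zero. First I would reduce to $p=0$. Since $A^p(\Omega)\subseteq A(\Omega)$ and $A_0^p(\hat{\C}\setminus\overline{\Omega})\subseteq A_0(\hat{\C}\setminus\overline{\Omega})$, and all of these spaces are regarded inside $C(\partial\Omega)$ via restriction to the boundary, the intersection for a general $p$ is contained in $A(\Omega)\cap A_0(\hat{\C}\setminus\overline{\Omega})$; hence it suffices to prove that this last intersection is trivial. So let $f\in C(\partial\Omega)$ belong to it, say $f=F_1|_{\partial\Omega}=F_2|_{\partial\Omega}$ with $F_1\in A(\Omega)$ and $F_2\in A_0(\hat{\C}\setminus\overline{\Omega})$.

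Next I would glue $F_1$ and $F_2$ along $\partial\Omega$. Define $G\colon\hat{\C}\to\C$ by setting $G=F_1$ on $\overline{\Omega}$ and $G=F_2$ on $\hat{\C}\setminus\Omega$. These two closed sets cover $\hat{\C}$ and their overlap is exactly $\partial\Omega$, on which $F_1=F_2=f$; thus $G$ is well defined, and by the pasting lemma it is continuous on $\hat{\C}$. Continuity at $\infty$ is part of this, because $\lim_{z\to\infty}F_2(z)=0$, so that $G(\infty)=0$. By construction $G$ is holomorphic on $\Omega$ and on $\hat{\C}\setminus\overline{\Omega}$, that is, on $\hat{\C}\setminus\partial\Omega$, and $\partial\Omega$ is compact, being a Jordan curve.

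Finally I would invoke the hypothesis on $\partial\Omega$. The assumption that $\partial\Omega$ has continuous analytic capacity zero is equivalent to the statement that $\partial\Omega$ is removable for continuous analytic functions: every function continuous on $\hat{\C}$ and holomorphic on $\hat{\C}\setminus\partial\Omega$ is in fact holomorphic across $\partial\Omega$, hence on all of $\hat{\C}$. Applying this to $G$ shows that $G$ is holomorphic on the entire sphere, so $G$ is constant; as $G(\infty)=0$ we conclude $G\equiv 0$, and therefore $f\equiv 0$. I expect the only real obstacle to be the correct use of this removability characterization of sets of continuous analytic capacity zero (together with the elementary verification that $G$ is genuinely continuous at $\infty$ and across the overlap); once that is in place, the Liouville-type conclusion on $\hat{\C}$ is immediate, while the reduction to $p=0$ and the gluing are routine.
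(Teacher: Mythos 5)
Your proposal is correct and follows essentially the same route as the paper: glue the two holomorphic extensions into a single function continuous on the sphere and holomorphic off $\partial \Omega$, invoke the removability characterization of sets of continuous analytic capacity zero, and finish with a Liouville-type argument using the vanishing at $\infty$. The reduction to $p=0$ and the explicit verification of continuity of the glued function are details the paper leaves implicit, but the argument is the same.
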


\begin{proof}A function $f$ in the intersection of the two spaces would have to be continuous on $\C$ and holomorphic on $\C\setminus \partial \Omega$. The boundary $\partial \Omega$ having zero continuous analytic capacity translates to the fact that such $f$ is entire (\cite{Bolkas}, \cite{Garnett}). Since $\lim_{z\to \infty}f(z)=0$ we have by Liouville's Theorem that $f$ is identically zero.\end{proof}

The complement of $\overline{\Omega}$ in the Riemann sphere is simply connected, so there is a Riemann map $\psi: {\overline D}^c\to {\overline {\Omega}}^c$. By the Carath\'eodory-Osgood Theorem, the map $\psi$ extends over the boundaries $\partial D,\partial \Omega$, as a homeomorphism $\delta:\T\to \partial \Omega$. The composition $\delta\circ \gamma^{-1}$ is a homeomorphism of $\partial \Omega$, and $\gamma^{-1}\circ \delta$ is a homeomorphism of $\T$. As the only injective entire maps $\C\to \C$ are linear, $\gamma\neq \delta$ unless $\Omega$ is a disc in the plane; the homeomorphism  $\gamma^{-1}\circ \delta:\T\to \T$ is usually not the identity mapping. Any homeomorphism of $\T$ obtained this way (for arbitrary Jordan domain $\Omega$) is called a welding (\cite{Elliptic}, \cite{Welding}).

\begin{prop}\label{ApOfComplementClosedSubspace}If $\delta\in C^p(\T)$ and $\delta'\neq 0$ then $A^p_0(\hat{\C}\setminus \overline{\Omega})$ is a closed subspace of $C^p(\partial \Omega)$, embedded via the restriction map.\end{prop}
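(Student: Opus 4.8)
The plan is to run, for the exterior domain $\Omega^{*}:=\hat\C\setminus\overline{\Omega}$, the exact counterpart of the chain of arguments culminating in Theorem \ref{ApClosedInCp}. The domain $\Omega^{*}$ is simply connected in the Riemann sphere, its Riemann map is $\psi$, and its boundary parametrization is $\delta=\psi|_{\T}$; these are to play the roles that $\phi$ and $\gamma$ played for $\Omega$. Because $\partial\Omega$ is a compact subset of $\C$ lying at positive distance from $\infty$, every estimate near the boundary takes place in a bounded region on which all the maps involved are finite, so neither the point at infinity nor the normalization $f(\infty)=0$ enters the boundary analysis; they matter only for membership in $A^{p}_{0}(\Omega^{*})$, which we already know to be a complete space.

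First I would prove the exterior form of the well-definedness theorem preceding Theorem \ref{ApClosedInCp}: for $f\in A^{p}_{0}(\Omega^{*})$ one has $f|_{\partial\Omega}\circ\delta\in C^{p}(\T)$. As there, approximate $\partial\Omega$ by the curves $\delta_{r}(e^{i\theta})=\psi(re^{i\theta})$, now letting $r\downarrow 1$ from outside the circle. The boundary regularity correspondence behind Proposition \ref{A^pC^pDerivatives}, applied near $\T$ where the value at $\infty$ is irrelevant, turns the hypothesis $\delta\in C^{p}(\T)$ into the continuity of the derivatives of $\psi$ up to order $p$ on $\T$; in particular $\psi'$ is uniformly continuous on a compact annulus about $\T$, which yields $\delta_{r}\to\delta$ and $\delta_{r}'\to\delta'$ uniformly. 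Passing to the limit in $(f\circ\delta_{r})'=(f'\circ\delta_{r})\,\delta_{r}'$ gives the chain rule $\frac{d(f|_{\partial\Omega}\circ\delta)}{d\theta}=(\widetilde{f'}\circ\delta)\,\delta'$, and an induction on $p$ handles the higher derivatives verbatim. The condition $\delta'\neq 0$ then lets me invert this relation as $\frac{df}{dz}=\frac{df}{d\theta}(\delta')^{-1}$, since $\delta'$ is continuous and nowhere zero on the compact set $\T$; a second induction shows that the intrinsic seminorms $\|f^{(l)}\|_{\infty,\partial\Omega}$ of $A^{p}_{0}(\Omega^{*})$ are equivalent to the seminorms $\|\frac{d^{l}(f|_{\partial\Omega}\circ\delta)}{d\theta^{l}}\|_{\infty,\T}$. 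Thus the restriction map is a topological isomorphism onto its image, and since $A^{p}_{0}(\Omega^{*})$ is complete the image is closed, exactly as in Theorem \ref{ApClosedInCp} (compare Proposition \ref{Functional2}).

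The one genuinely delicate point is that $C^{p}(\partial\Omega)$ is defined through the interior parametrization $\gamma$, whereas the argument above produces $\delta$-smoothness. Writing $f|_{\partial\Omega}\circ\gamma=(f|_{\partial\Omega}\circ\delta)\circ(\delta^{-1}\circ\gamma)$, one sees that $\delta$-smoothness transfers to $\gamma$-smoothness exactly when the welding $\delta^{-1}\circ\gamma$ is a $C^{p}$ diffeomorphism of $\T$; this is the case under the natural companion hypothesis $\gamma\in C^{p}(\T)$, $\gamma'\neq 0$ carried by Theorem \ref{ApClosedInCp}, under which the $\gamma$- and $\delta$-descriptions of $C^{p}(\partial\Omega)$ agree. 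I expect this reconciliation to be the main obstacle. An alternative that sidesteps the exterior Riemann map is to normalize $0\in\Omega$ and apply the inversion $z\mapsto 1/z$, which carries $\Omega^{*}$ onto a bounded Jordan domain and $A^{p}_{0}(\Omega^{*})$ onto $\{g\in A^{p}:g(0)=0\}$ of that domain, reducing the statement to a direct application of Theorem \ref{ApClosedInCp}; the same welding bookkeeping reappears when transferring the conclusion back to $C^{p}(\partial\Omega)$.
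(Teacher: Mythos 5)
Your proposal follows exactly the route the paper intends: its entire proof of this proposition is the single line ``Analogous to the proof of Theorem \ref{ApClosedInCp},'' i.e., rerun the interior argument with the exterior Riemann map $\psi$ and its boundary parametrization $\delta$ in place of $\phi$ and $\gamma$, and conclude closedness from completeness via the seminorm equivalence furnished by $\delta'\neq 0$. Your further observation that $C^p(\partial\Omega)$ is defined through $\gamma$, so that one must also know the welding $\delta^{-1}\circ\gamma$ is a $C^p$ diffeomorphism (which requires the companion hypotheses $\gamma\in C^p(\T)$, $\gamma'\neq 0$ --- precisely the ones in force in the theorem where this proposition is applied, and without which already $f(z)=1/z$ gives a counterexample to the bare statement), is a genuine point that the paper's one-line proof leaves implicit.
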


\begin{proof}Analogous to the proof of Theorem \ref{ApClosedInCp}.\end{proof}

\begin{thm}If either one of the following items are true
\begin{itemize}\item $p=0$ and $\partial \Omega$ has continuous analytic capacity $0$
\item $1\le p<+\infty$ and  $\gamma,\delta\in C^p(\T)$, $\gamma',\delta'\neq 0$ \end{itemize}
then there is a function in $C^p(\partial \Omega)$ that can't be decomposed as the sum of a function in $A^p(\Omega)$ and another in $A^p_0(\hat{\C}\setminus \overline{\Omega})$.\end{thm}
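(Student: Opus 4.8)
The plan is to argue by contradiction. Assuming that every $f\in C^p(\partial\Omega)$ splits as $g+h$ with $g\in A^p(\Omega)$ and $h\in A^p_0(\hat\C\setminus\overline\Omega)$, i.e. that $C^p(\partial\Omega)=A^p(\Omega)+A^p_0(\hat\C\setminus\overline\Omega)$, I would show this forces $A^p(\Omega)$ to be a complemented subspace of $C^p(\partial\Omega)$, contradicting Theorem \ref{ApNotComplemental}. The three ingredients needed to invoke Proposition \ref{Functional2} are: (i) the two summands have trivial intersection, (ii) both are closed subspaces of $C^p(\partial\Omega)$, and (iii) their sum is the whole space (the contradiction hypothesis). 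Once these are in place, Proposition \ref{Functional2} upgrades the algebraic decomposition to a topological direct sum $C^p(\partial\Omega)=A^p(\Omega)\oplus A^p_0(\hat\C\setminus\overline\Omega)$ with continuous projections, exhibiting $A^p(\Omega)$ as complemented.

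For the trivial intersection (i), the case $p=0$ is immediate from Proposition \ref{TrivialIntersectionApOmega}, whose hypothesis is exactly that $\partial\Omega$ has continuous analytic capacity $0$. For $1\le p<+\infty$ the hypotheses $\gamma\in C^p(\T)\subseteq C^1(\T)$ and $\gamma'\neq 0$ make $\partial\Omega=\gamma(\T)$ a $C^1$ Jordan curve with non-vanishing tangent. Such a curve is removable for continuous functions: a function continuous on $\C$ and holomorphic off $\partial\Omega$ is entire, by a Morera argument in which triangles crossing $\partial\Omega$ are split along the curve so that the boundary integrals cancel. Equivalently, $\partial\Omega$ has continuous analytic capacity $0$, and Proposition \ref{TrivialIntersectionApOmega} again yields a trivial intersection (an element of the intersection patches to a single entire function vanishing at $\infty$, hence $0$). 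I expect this removability step to be the main point requiring care, although it is classical; it is precisely what ties the smoothness hypotheses on $\gamma$ to the analytic-capacity hypothesis feeding Proposition \ref{TrivialIntersectionApOmega}.

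For the closedness (ii): when $p=0$, $A(\Omega)$ is closed in $C(\partial\Omega)$ by the $p=0$ part of Theorem \ref{ApClosedInCp}, and $A_0(\hat\C\setminus\overline\Omega)$ is closed by the maximum-principle argument analogous to that case, since the restriction map to $\partial\Omega$ is then an isometry onto its image and the domain space is a Banach space. When $1\le p<+\infty$, closedness of $A^p(\Omega)$ follows from Theorem \ref{ApClosedInCp} using $\gamma\in C^p(\T)$ and $\gamma'\neq 0$, and closedness of $A^p_0(\hat\C\setminus\overline\Omega)$ from Proposition \ref{ApOfComplementClosedSubspace} using $\delta\in C^p(\T)$ and $\delta'\neq 0$.

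Finally, with (i)--(iii) established, Proposition \ref{Functional2} yields the direct sum and hence a continuous projection of $C^p(\partial\Omega)$ onto $A^p(\Omega)$. To reach a contradiction via Theorem \ref{ApNotComplemental} I must check its hypotheses. For $p=0$ they hold unconditionally. For $1\le p<+\infty$ I would note that a $C^1$ boundary is rectifiable, so $\Omega$ satisfies condition $1$ defining the family $\mathcal F$; since Theorem \ref{ApNotComplemental} applies to every $\Omega\in\mathcal F$ with $p<+\infty$, it asserts that $A^p(\Omega)$ is not isomorphic to any complemented subspace of $C^p(\partial\Omega)$. This contradicts the complementation obtained in the previous paragraph, and therefore the assumed decomposition must fail for some $f\in C^p(\partial\Omega)$, proving the theorem.
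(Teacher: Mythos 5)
Your proof is correct and follows essentially the same route as the paper: closedness of the two subspaces (Theorem \ref{ApClosedInCp} and Proposition \ref{ApOfComplementClosedSubspace}), trivial intersection (Proposition \ref{TrivialIntersectionApOmega}), and Proposition \ref{Functional2} to upgrade the assumed algebraic sum to a complementation of $A^p(\Omega)$ in $C^p(\partial \Omega)$, contradicting Theorem \ref{ApNotComplemental}. You are in fact more explicit than the paper on two points it leaves implicit for $1\le p<+\infty$: that a $C^1$ Jordan curve has continuous analytic capacity zero, so that Proposition \ref{TrivialIntersectionApOmega} applies, and that such a curve is rectifiable, so that $\Omega\in\mathcal F$ and Theorem \ref{ApNotComplemental} applies.
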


\begin{proof}Under our conditions, $A^p(\Omega)$ and $A^p_0(\hat{\C}\setminus \overline{\Omega})$ are closed subspaces of $C^p(\partial \Omega)$ (Propositions \ref{ApClosedInCp} and \ref{ApOfComplementClosedSubspace}) with trivial intersection (Proposition \ref{TrivialIntersectionApOmega}). Therefore, by Proposition \ref{Functional2}, if we assume that $C^p(\partial \Omega)=A^p(\Omega)+A^p_0(\hat{\C}\setminus \overline{\Omega})$ then $A^p(\Omega)$ is complemented in $C^p(\partial \Omega)$, contradicting Theorem \ref{ApNotComplemental}. Thus, $C^p(\partial \Omega)\neq A^p(\Omega)+A^p_0(\hat{\C}\setminus \overline{\Omega})$.\end{proof}

We shall now describe some other splittings of $C^{\infty}(\T)$ in terms of $A^{\infty}(\Omega)$, $A_0^{\infty}(\hat{\C}\setminus \overline{\Omega})$ and the welding $\delta\circ \gamma^{-1}$ (and its inverse). But before we do that, let us fix some notation:

\begin{nota} If $A$ is a set of functions $X\to Y$ and $g:Z\to X$ then we denote $A\circ g=\{f\circ g: f\in A\}$.\end{nota}

\begin{prop}Assume $\gamma,\delta\in C^{\infty}(\T)$ and $\gamma',\delta'\neq 0$. Then

\begin{itemize}\item $C^{\infty}(\partial \Omega)=A^{\infty}(\Omega)\oplus [A_0^{\infty}(\hat{\C}\setminus \overline{\Omega})\circ \delta\circ \gamma^{-1}]$
\item $C^{\infty}(\partial \Omega)=[A^{\infty}(\Omega)\circ \gamma\circ \delta^{-1}]\oplus A_0^{\infty}(\hat{\C}\setminus \overline{\Omega})$\end{itemize}\end{prop}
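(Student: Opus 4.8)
The plan is to reduce each of the two splittings to the already-established decomposition
\[
C^\infty(\partial\Omega)=A^\infty(\Omega)\oplus A_0^\infty(\hat{\C}\setminus\overline{\Omega})
\]
by precomposing with a suitable smooth diffeomorphism of the circle. Let me think about what this statement is really asking, and whether the cited decomposition is in fact available to me.

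Wait — I need to check: has the ``full'' Laurent decomposition $C^\infty(\partial\Omega)=A^\infty(\Omega)\oplus A_0^\infty(\hat\C\setminus\overline\Omega)$ actually been proved in the excerpt? Looking back, the Theorem \ref{CInfinityDecomposition} splitting is for the \emph{circle}, $C^\infty(\T)=A^\infty(D)\oplus A_0^\infty(\hat\C\setminus\overline D)$. For the Jordan domain, what's proved is the \emph{conjugate} version (Theorem \ref{ConjugateJordanDomain}) and the corollary $C^\infty(\partial\Omega)=A^\infty(\Omega)\oplus\overline{A_0^\infty(\Omega)}$, plus the fact (Propositions \ref{ApClosedInCp}, \ref{ApOfComplementClosedSubspace}, \ref{TrivialIntersectionApOmega}) that $A^\infty(\Omega)$ and $A_0^\infty(\hat\C\setminus\overline\Omega)$ are closed subspaces with trivial intersection. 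The text says the $A^\infty(\Omega)\oplus A_0^\infty(\hat\C\setminus\overline\Omega)$ decomposition is ``the trickiest to generalize'' and is ``postpone[d] until the end of this section'' — so I should treat it as established by that point (it must appear just before this Proposition). I will assume it.
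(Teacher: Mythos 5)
There is a genuine gap, and it is fatal to the plan: the decomposition $C^\infty(\partial\Omega)=A^\infty(\Omega)\oplus A_0^\infty(\hat{\C}\setminus\overline{\Omega})$ that you propose to ``assume'' is \emph{not} established anywhere in the paper. It is precisely the open problem of this section: the theorem immediately following this Proposition only gives conditions \emph{equivalent} to that splitting, and the Question after it asks whether the splitting can fail for some Jordan domain (the author suspects it can). What has been proved unconditionally is only that $A^\infty(\Omega)$ and $A_0^\infty(\hat{\C}\setminus\overline{\Omega})$ are closed with trivial intersection; whether they together span $C^\infty(\partial\Omega)$ is unknown. The whole point of the Proposition you are asked to prove is that the complement of $A^\infty(\Omega)$ that one \emph{can} exhibit is not $A_0^\infty(\hat{\C}\setminus\overline{\Omega})$ itself but its twist by the welding $\delta\circ\gamma^{-1}$. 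So a reduction to the untwisted splitting is circular (and likely reduces to a false statement).

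The argument that does work goes in the opposite direction: transport the Riesz projection from the circle, where the splitting \emph{is} known (Theorem \ref{CInfinityDecomposition}). Let $P:C^\infty(\T)\to A^\infty(D)$ be the canonical projection $\sum_{n\in\Z}a_nz^n\mapsto\sum_{n\ge0}a_nz^n$, and define $Q:C^\infty(\partial\Omega)\to A^\infty(\Omega)$ by $Q(f)=P(f\circ\gamma)\circ\gamma^{-1}$. Under the hypotheses $\gamma\in C^\infty(\T)$, $\gamma'\neq 0$, composition with $\gamma$ is an isomorphism carrying $A^\infty(\Omega)$ onto $A^\infty(D)$, so $Q$ is a continuous projection fixing $A^\infty(\Omega)$, whence $C^\infty(\partial\Omega)=A^\infty(\Omega)\oplus\ker Q$. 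Computing the kernel, $Q(f)=0$ iff $f\circ\gamma\in A_0^\infty(\hat{\C}\setminus\overline D)$ iff $f\circ\gamma\circ\delta^{-1}\in A_0^\infty(\hat{\C}\setminus\overline\Omega)$ (here $\delta\in C^\infty(\T)$, $\delta'\neq0$ is used), i.e.\ $f\in A_0^\infty(\hat{\C}\setminus\overline{\Omega})\circ\delta\circ\gamma^{-1}$. This is the first item; the second is symmetric, transporting the complementary projection via $\delta$ instead of $\gamma$.
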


\begin{proof}Let $P:C^{\infty}(\T)\to A^{\infty}(D)$ be the canonical projection, $P(\sum_{n=-\infty}^{+\infty}a_nz^n)=\sum_{n=0}^{\infty}a_nz^n$. Then $Q:C^{\infty}(\partial \Omega)\to A^{\infty}(\Omega)$ defined by $Q(f)=P(f\circ \gamma)\circ \gamma^{-1}$ is a projection (it fixes $A^{\infty}(\Omega)$ precisely because $P$ does). Therefore, $C^{\infty}(\partial \Omega)=A^{\infty}(\Omega)\oplus KerQ$. We determine the kernel:
\begin{align*}Q(f)=0\iff &P(f\circ \gamma)=0\iff \\
&f\circ \gamma\in A_0^{\infty}(\hat{\C}\setminus \overline D)\iff f\circ \gamma\circ \delta^{-1}\in A_0^{\infty}(\hat{\C}\setminus \overline{\Omega})\iff\\
& f\in A_0^{\infty}(\hat{\C}\setminus \overline{\Omega})\circ \delta\circ \gamma^{-1} \end{align*}
The other item follows similarly.\end{proof}

We shall now examine how the decomposition in Theorem \ref{CInfinityDecomposition} extends to our Jordan domain $\Omega$.

\begin{thm}For $\gamma,\delta\in C^{\infty}(\T)$ with $\gamma',\delta'\neq 0$ the following are equivalent
\begin{itemize}
\item[1.] Every function in $C^{\infty}(\partial \Omega)$ has a unique decomposition as the sum of a function in $A^{\infty}(\Omega)$ and a function in $A _0^{\infty}(\hat{\C}\setminus \overline{\Omega})$\smallbreak
\item[2.] $C^{\infty}(\partial \Omega)=A^{\infty}(\Omega)\oplus A_0^{\infty}(\hat{\C}\setminus \overline{\Omega})$\smallbreak
\item[3.] $C^{\infty}(\T)=A^{\infty}(D)\oplus  [A^{\infty}_0(\hat{\C}\setminus \overline D)\circ \delta^{-1}\circ \gamma]$\smallbreak
\item[4.]  $C^{\infty}(\T)=[A^{\infty}(D)\circ \gamma^{-1}\circ \delta]\oplus  A^{\infty}_0(\hat{\C}\setminus \overline D)$
\end{itemize}
\end{thm}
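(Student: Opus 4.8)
The plan is to prove $1\iff 2$ by a soft functional-analytic argument, and then to obtain $2\iff 3$ and $2\iff 4$ by transporting the decomposition of $C^\infty(\partial\Omega)$ over to $C^\infty(\T)$ through the two boundary parametrizations $\gamma$ and $\delta$. The whole theorem then reduces to (a) one application of Proposition \ref{Functional2} and (b) the observation that a topological isomorphism carries a topological direct sum of closed subspaces to a topological direct sum of their images.

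For $1\iff 2$, I would first record that, under the standing hypotheses $\gamma,\delta\in C^\infty(\T)$ and $\gamma',\delta'\neq 0$, both $A^\infty(\Omega)$ and $A_0^\infty(\hat\C\setminus\overline\Omega)$ are \emph{closed} subspaces of the Fr\'echet space $C^\infty(\partial\Omega)$: the former by Theorem \ref{ApClosedInCp} and the latter by Proposition \ref{ApOfComplementClosedSubspace}. Condition 1 says precisely that these two subspaces form an algebraic direct sum equal to $C^\infty(\partial\Omega)$, existence of the decomposition giving $A^\infty(\Omega)+A_0^\infty(\hat\C\setminus\overline\Omega)=C^\infty(\partial\Omega)$ and uniqueness giving the trivial intersection (one may also note the intersection is automatically trivial by Proposition \ref{TrivialIntersectionApOmega}, since the smooth curve $\partial\Omega$ is removable for continuous functions, i.e. has zero continuous analytic capacity). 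Since the two summands are closed, Proposition \ref{Functional2} upgrades this algebraic direct sum to the topological direct sum of condition 2; the reverse implication $2\Rightarrow 1$ is immediate. This is the step where completeness and the Open Mapping Theorem, packaged inside Proposition \ref{Functional2}, do the real work.

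Next I would set up the transport isomorphisms $T_\gamma,T_\delta:C^\infty(\partial\Omega)\to C^\infty(\T)$ given by $T_\gamma(f)=f\circ\gamma$ and $T_\delta(f)=f\circ\delta$. The map $T_\gamma$ is an isometric isomorphism by the very definition of $C^\infty(\partial\Omega)$, and since $T_\delta(f)=T_\gamma(f)\circ(\gamma^{-1}\circ\delta)$ with $\gamma^{-1}\circ\delta$ a $C^\infty$ diffeomorphism of $\T$ (the derivatives of $\gamma,\delta$ do not vanish), $T_\delta$ is an isomorphism as well. The crucial computation is to identify the images of the two subspaces. Because $\gamma=\phi|_{\T}$ with $\phi\in A^\infty(D)$ and $\phi'\neq 0$ on $\overline D$ (equivalent to $\gamma\in C^\infty(\T)$, $\gamma'\neq 0$, via Proposition \ref{A^pC^pDerivatives}), the map $g\mapsto g\circ\phi$ is an isomorphism $A^\infty(\Omega)\to A^\infty(D)$, so that $A^\infty(\Omega)\circ\gamma=A^\infty(D)$; symmetrically, writing $\delta$ as the boundary map of the exterior Riemann map $\psi$ with $\psi\in A^\infty$ and $\psi'\neq 0$, one gets $A_0^\infty(\hat\C\setminus\overline\Omega)\circ\delta=A_0^\infty(\hat\C\setminus\overline D)$. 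Post-composing with the diffeomorphism $\gamma^{-1}\circ\delta$ and its inverse then yields $A_0^\infty(\hat\C\setminus\overline\Omega)\circ\gamma=A_0^\infty(\hat\C\setminus\overline D)\circ(\delta^{-1}\circ\gamma)$ and $A^\infty(\Omega)\circ\delta=A^\infty(D)\circ(\gamma^{-1}\circ\delta)$.

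Finally, I would simply apply $T_\gamma$ to the topological direct sum in condition 2 to obtain $C^\infty(\T)=A^\infty(D)\oplus[A_0^\infty(\hat\C\setminus\overline D)\circ(\delta^{-1}\circ\gamma)]$, which is condition 3, and apply $T_\delta$ to obtain $C^\infty(\T)=[A^\infty(D)\circ(\gamma^{-1}\circ\delta)]\oplus A_0^\infty(\hat\C\setminus\overline D)$, which is condition 4; applying the inverse isomorphisms $T_\gamma^{-1},T_\delta^{-1}$ gives the reverse implications, so that $2\iff 3$ and $2\iff 4$. I expect the only genuinely delicate point to be the identification of the images $A^\infty(\Omega)\circ\gamma=A^\infty(D)$ and $A_0^\infty(\hat\C\setminus\overline\Omega)\circ\delta=A_0^\infty(\hat\C\setminus\overline D)$, namely verifying that pre-composition with the smooth, non-degenerate interior and exterior Riemann maps preserves simultaneously the holomorphic-extension structure and the $C^\infty$ boundary smoothness of these spaces; once that is in hand, everything else is the formal transport of direct sums under isomorphisms.
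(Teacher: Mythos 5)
Your proposal is correct and follows essentially the same route as the paper: the equivalence $1\iff 2$ via Proposition \ref{Functional2} (using that both summands are closed), and the transport of the splitting to $C^{\infty}(\T)$ through the parametrizations, with the same key identification $A_0^{\infty}(\hat{\C}\setminus\overline{\Omega})\circ\gamma=A_0^{\infty}(\hat{\C}\setminus\overline{D})\circ\delta^{-1}\circ\gamma$. The only cosmetic difference is that the paper conjugates the \emph{projection} $P$ by $f\mapsto f\circ\gamma$ and computes its kernel, whereas you apply the isomorphism $T_\gamma$ directly to the direct sum; these are the same argument.
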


\begin{proof}The first and second items are equivalent by Proposition \ref{Functional2}. Let us now show the equivalence of the second and third items.

 Let $P:C^{\infty}(\partial \Omega)\to A^{\infty}(\Omega)$ be the projection corresponding to the splitting $C^{\infty}(\partial \Omega)=A^{\infty}(\Omega)\oplus A_0^{\infty}(\hat{\C}\setminus \overline{\Omega})$. Then $Q:C^{\infty}(\T)\to A^{\infty}(D)$, $Q(u)=P(f\circ \gamma^{-1})\circ \gamma$, is a projection as usual, hence $C^{\infty}(\T)=A^{\infty}(D)\oplus Ker Q$. We determine the kernel
\begin{align*}Q(f)=0\iff &P(f\circ \gamma^{-1})=0\iff \\
&f\circ \gamma^{-1}\in A_0^{\infty}(\hat{\C}\setminus \overline{\Omega})\iff f\circ \gamma^{-1}\circ \delta\in A_0^{\infty}(\hat{\C}\setminus \overline D)\iff\\
&f\in A_0^{\infty}(\hat{\C}\setminus \overline D)\circ \delta^{-1}\circ \gamma
\end{align*}
The equivalence of the second and fourth items is similar.
\end{proof}

\begin{que}Is there a Jordan domain $\Omega$ and a function $f\in C^{\infty}(\partial \Omega)$ that can't be decomposed as $f=g+h$ for any $g\in A^{\infty}(\Omega)$ and $h\in A_0^{\infty}(\hat{\C}\setminus \overline{\Omega})$?\end{que}

We suspect the answer is affirmative for the following reason: The map $w(e^{i\theta})=e^{-i\theta}$ is a welding by the conformal welding theorem and certainly, $A^{\infty}(D)\circ w+ A^{\infty}_0(\hat{\C}\setminus \overline D)$ is not a direct sum. But we don't yet know if the functions $\gamma,\delta$ realizing the welding are $C^{\infty}$ diffeomorphisms of $\T$. 

The conformal welding theorem states in particular that every quasi-symmetry of the circle is a welding (\cite{Elliptic},\cite{Welding Theorem}). An injection $f:A\to \C$, $A\subseteq \C$, is quasi-symmetric, if there is an increasing homeomorphism $\eta:[0,+\infty)\to [0,+\infty)$ with
\begin{equation*} \frac{|f(x)-f(y)|}{|f(x)-f(z)|} \leq \eta\left(\frac{|x-y|}{|x-z|}\right)\end{equation*}
for all triples $x,y,z\in A$, $x\neq z$. Clearly, diffeomorphisms $\T\to \T$ are quasi-symmetric.

\section{Internally Tangent Circles}\label{InternallTangentSection}

Take a disc $D'$ in the interior of $D$, with $\partial D'$ tangent at $1$ to $\T=\partial D$, and let $\Omega=D-\overline{D'}$. We denote by $\T$ the boundary of $D$ (as usual) and the boundary of $D'$ by $\gamma$. We examine whether or not every function $f\in A^p(\Omega)$ has a decomposition as $f=g+h$ for $g\in A^p(D)$ and $h\in A^p_0(\hat {\C}\setminus \overline{D'})$. The case $p=+\infty$ is easily dealt with:

\begin{thm}Every function in $A^{\infty}(\Omega)$ can be written uniquely as the sum of a function in $A^{\infty}(D)$ and of a function in $A^{\infty}_0(\hat \C\setminus \overline{D'})$.\end{thm}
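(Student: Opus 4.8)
The plan is to reconstruct the two summands as Cauchy transforms of the boundary values of $f$ on the two circles, exactly as in the classical Laurent decomposition, and to use the infinite smoothness to control their regularity. Write $c$ for the centre and $\rho$ for the radius of $D'$, so that $\overline{D'}\cap\T=\{1\}$. Note first the structural facts $\Omega=D\cap(\hat\C\setminus\overline{D'})$ and $D\cup(\hat\C\setminus\overline{D'})=\hat\C\setminus\{1\}$, the latter because the only point of $\overline{D'}$ lying outside $D$ is the tangency point $1$. Setting $u=f|_{\T}$ and $v=f|_{\gamma}$, I would define
\[
g(z)=\frac{1}{2\pi i}\int_{\T}\frac{u(\zeta)}{\zeta-z}\,d\zeta\quad(z\in D),\qquad
h(z)=-\frac{1}{2\pi i}\int_{\gamma}\frac{v(\zeta)}{\zeta-z}\,d\zeta\quad(z\in\hat\C\setminus\overline{D'}).
\]
For $z$ fixed in the relevant region the integrands are bounded, since $z$ stays at positive distance from the compact circle, so both functions are well defined and holomorphic and $h(\infty)=0$.

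Next I would establish $g\in A^{\infty}(D)$ and $h\in A^{\infty}_0(\hat\C\setminus\overline{D'})$. Because $f\in A^{\infty}(\Omega)$, each $f^{(l)}$ extends continuously to $\overline\Omega\supseteq\T\cup\gamma$; as the two circles are smooth and are approached from within $\overline\Omega$ even at the cusp $1$, the chain rule \eqref{DerivativeWithRespectTot}, iterated as in Proposition \ref{A^pC^pDerivatives}, gives $u\in C^{\infty}(\T)$ and $v\in C^{\infty}(\gamma)$. Expanding $1/(\zeta-z)$ in the two integrals yields $g(z)=\sum_{n\ge0}\hat u(n)z^{n}$ and, after the substitution $w=\rho/(z-c)$ mapping $\hat\C\setminus\overline{D'}$ onto $D$, a power series for $h$ whose coefficients are, up to this change of variable, Fourier coefficients of $v$. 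Since $u,v$ are $C^{\infty}$, these coefficients decay faster than any power, so by the Fourier-coefficient characterisation recalled at the start of Section \ref{CircleCaseSection} we obtain $g\in A^{\infty}(D)$ and $h\in A^{\infty}_0(\hat\C\setminus\overline{D'})$. This is exactly the place where $p=+\infty$ is used: the analytic projection $u\mapsto\sum_{n\ge0}\hat u(n)z^{n}$ is bounded on $C^{\infty}$ but not on $C^{p}$, which is what makes the case $p<+\infty$ delicate (cf.\ Question \ref{FinalQuestion}).

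It then remains to prove the representation $f=g+h$ on $\Omega$, which is Cauchy's integral formula for the crescent with its positively oriented boundary, $\T$ counterclockwise and $\gamma$ clockwise. I expect this to be the main obstacle, because $\partial\Omega$ is not a smooth Jordan curve: the two circles meet at the cusp $1$. I would handle it by exhaustion. For small $\epsilon>0$ the set $\Omega_\epsilon=\Omega\setminus\overline{B(1,\epsilon)}$ is a genuine Jordan domain with rectifiable boundary, consisting of the large arcs of $\T$ and $\gamma$ outside $B(1,\epsilon)$ joined by two short bridging arcs of $\partial B(1,\epsilon)$. Applying the ordinary Cauchy formula on $\Omega_\epsilon$, valid since $f\in A(\Omega)$ is continuous up to the boundary, and letting $\epsilon\to0$, the two bridging integrals are $O(\epsilon)$ (bounded integrand, length $O(\epsilon)$) and vanish, while the arc integrals converge to the integrals over $\T$ and $\gamma$; this gives $f=g+h$ on $\Omega$. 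One must verify that the cusp geometry makes $\partial\Omega_\epsilon$ a single Jordan curve and that the fixed $z\in\Omega$ stays bounded away from the shrinking contour, both of which hold because $z$ is a fixed point away from $1$.

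Finally, for uniqueness I would suppose $g+h=0$ on $\Omega$. Then $g$ and $-h$ agree on the overlap $\Omega=D\cap(\hat\C\setminus\overline{D'})$, hence define a single holomorphic function $F$ on $D\cup(\hat\C\setminus\overline{D'})=\hat\C\setminus\{1\}$. Since $g$ extends continuously to $\overline D$ and $h$ to the closure of $\hat\C\setminus D'$, the function $F$ is bounded on a punctured neighbourhood of $1$, so $1$ is a removable singularity and $F$ extends to an entire function on $\hat\C$, that is, a constant; as $F(\infty)=-h(\infty)=0$ we conclude $F\equiv0$ and hence $g=h=0$. This yields both existence and uniqueness of the decomposition.
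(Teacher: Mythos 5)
Your argument is correct, but it takes a genuinely different route from the paper's. The paper never writes down a Cauchy integral: it restricts $f$ to the outer circle, notes that $f|_{\T}\in C^{\infty}(\T)$ (the $\theta$-derivatives extend continuously even at the tangency point $1$, because the complex derivatives extend to $\overline{\Omega}$), applies the already-proved splitting of Theorem \ref{CInfinityDecomposition} to get $f=g+h$ on $\T$ with $g\in A^{\infty}(D)$ and $h\in A^{\infty}_0(\hat{\C}\setminus\overline{D})$, and then extends $h$ \emph{inward} across $\T$ by declaring $h:=f-g$ on $D\setminus D'$; matching boundary values on $\T$ plus a Morera/removability argument give holomorphy of the glued $h$ on $\hat{\C}\setminus\overline{D'}$, and the smoothness up to $\gamma$ comes from that of $f$ and $g$. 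You instead construct both summands simultaneously as Cauchy transforms over the two boundary circles, identify them with analytic projections via the rapid decay of the Fourier coefficients of $f|_{\T}$ and $f|_{\gamma}$, and recover $f=g+h$ from Cauchy's formula on the crescent with the cusp excised. Your version is more constructive and treats the two circles symmetrically, at the price of needing Cauchy's integral formula for a function merely continuous up to a piecewise-circular Jordan boundary --- which your exhaustion handles correctly, since the two bridging arcs have length $O(\epsilon)$ and the fixed $z$ stays away from them; the paper's version avoids all contour integration near the cusp but buries the analytic content in the gluing step (holomorphy of the extended $h$ across $\T$), which itself requires a Morera-type argument that the paper only asserts. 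The uniqueness arguments are essentially the same Liouville argument in both, and by that uniqueness the two constructions produce the same pair $(g,h)$.
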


\begin{proof}If $f\in A^{\infty}(\Omega)$ then clearly, $f\in C^{\infty}(\T-\{1\})$. The derivatives of $f$ extend continuously over $\T$ hence $f\in C^{\infty}(\T)$; we similarly have that $f\in C^{\infty}(\partial D')$.  By Theorem \ref{CInfinityDecomposition}, there are $g\in A^{\infty}(D)$ and $h\in A^{\infty}_0(\hat \C\setminus \overline D)$ so that $f=g+h$ on the unit circle. We extend $h$ on $D'^c$ by setting $h=f-g$ over $D-D'$; the derivatives of $h$ are also continuously extended this way. It is easy to see that the extended $h$ is holomorphic over $\overline{D'}^c$, so $h\in A^{\infty}_0(\hat \C\setminus \overline{D'})$ as desired. This proves the existence of the decomposition. For the uniqueness of this decomposition, we observe that Liouville's Theorem implies that $A^{\infty}(D)\cap A^{\infty}_0(\hat \C\setminus \overline{D'})=0$.\end{proof}

Combining this with Proposition \ref{Functional2} we obtain:

\begin{cor}$A^{\infty}(\Omega)=A^{\infty}(D)\oplus A^{\infty}_0(\hat \C\setminus \overline{D'})$.\end{cor}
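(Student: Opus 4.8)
The plan is to establish the corollary by invoking Proposition \ref{Functional2}, which characterizes direct sum decompositions in Fréchet spaces. First I would confirm that the ambient space $A^{\infty}(\Omega)$ is a Fréchet space; this is immediate from its definition via the countable family of semi-norms $\|f^{(l)}\|_{\infty,\Omega}$, $l\in\N$, together with completeness, which follows from the standard argument that a sequence Cauchy in every semi-norm converges uniformly along with all its derivatives on $\overline{\Omega}$, and the limit is holomorphic on $\Omega$ by Morera's theorem. Thus $A^{\infty}(\Omega)$ meets the hypotheses of Proposition \ref{Functional2}.

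Next I would verify the three set-theoretic hypotheses of that proposition for the subspaces $A=A^{\infty}(D)$ and $C=A^{\infty}_0(\hat\C\setminus\overline{D'})$, viewed inside $A^{\infty}(\Omega)$ via restriction. The preceding theorem supplies exactly $A+C=A^{\infty}(\Omega)$ (existence of the decomposition $f=g+h$) and $A\cap C=0$ (the Liouville argument, since any function in both spaces extends to an entire function vanishing at infinity). Here I must be slightly careful that the restriction maps $A^{\infty}(D)\to A^{\infty}(\Omega)$ and $A^{\infty}_0(\hat\C\setminus\overline{D'})\to A^{\infty}(\Omega)$ are continuous embeddings, so that the images are genuine subspaces carrying the subspace topology; this holds because $\Omega\subseteq D$ and $\Omega\subseteq \overline{D'}^c$, so each semi-norm $\|f^{(l)}\|_{\infty,\Omega}$ is dominated by the corresponding semi-norm on the larger domain.

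With the hypotheses in place, the work reduces to checking condition 2 or condition 3 of Proposition \ref{Functional2}, namely that $A^{\infty}(D)$ and $A^{\infty}_0(\hat\C\setminus\overline{D'})$ are closed in $A^{\infty}(\Omega)$. For closedness of $A^{\infty}(D)$: if $f_n\in A^{\infty}(D)$ converge to $f\in A^{\infty}(\Omega)$ in every semi-norm, then in particular $f_n\to f$ uniformly on $\overline{\Omega}$, and since the $f_n$ are holomorphic and uniformly bounded on $D\supseteq\Omega$, one can argue that $f$ extends holomorphically across all of $D$, using the uniform convergence of derivatives on the common domain together with the analyticity of the limit. The closedness of $A^{\infty}_0(\hat\C\setminus\overline{D'})$ follows symmetrically, with the role of the tangency point and the condition at infinity preserved under the limit. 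Alternatively, and perhaps more cleanly, closedness is automatic once one recognizes that the decomposition map itself is continuous, which is precisely what Proposition \ref{Functional2} delivers in the reverse direction.

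The main obstacle I anticipate is the tangency point $1$, where $\partial D$ and $\partial D'$ meet, since the domain $\Omega$ is not a Jordan domain and the boundary is not smooth there. The crux is ensuring that the extension step in the preceding theorem (extending $h$ across $D\setminus D'$ and confirming $h\in A^{\infty}_0(\hat\C\setminus\overline{D'})$) genuinely places $h$ in the right space with all derivatives continuous up to the cusp. Fortunately this delicate point has already been absorbed into the preceding theorem, so at the level of this corollary the only remaining task is the formal application of Proposition \ref{Functional2}; I would therefore keep the proof short, simply citing the theorem for the decomposition and trivial intersection and invoking the proposition to upgrade this to a topological direct sum.
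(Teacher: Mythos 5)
Your proposal is correct and follows essentially the same route as the paper, which likewise obtains the corollary by combining the preceding theorem (existence of the decomposition plus trivial intersection via Liouville) with Proposition \ref{Functional2}; the closedness you verify is the only point the paper leaves implicit, and it is obtained most cleanly from the maximum modulus principle, since $\T\cup\partial D'=\partial\Omega$ forces the semi-norms of $A^{\infty}(\Omega)$ to restrict to the natural semi-norms of $A^{\infty}(D)$ and of $A^{\infty}_0(\hat\C\setminus\overline{D'})$, so both subspaces are isometrically embedded, hence complete and closed (your version, via uniform boundedness on $D$, is vaguer than necessary). One caveat: your closing ``alternative'' argument, deducing closedness from continuity of the decomposition map, is circular, since that continuity is precisely what Proposition \ref{Functional2} is being invoked to produce, so you should rely on the direct argument only.
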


\begin{que}For $p<+\infty$, is there an $F\in A^p(\Omega)$ that can't be written as the sum of a function in $A^p(D)$ and another in $A^p_0(\hat{\C}\setminus \overline {D'})$? \end{que}

 We suspect that the decomposition does not hold (just as in the case for the circle/Jordan curve) and will now collect some sufficient conditions for a function $F\in A^p(\Omega)$ to not be in $A^p(D)+A^p_0(\hat \C\setminus \overline{D'})$. \\
The Cauchy transform is defined as
\begin{equation*}C_{\T}(F)=\frac1{2\pi i}\int_{\T}\frac{f(\zeta)}{\zeta-z}d\zeta\end{equation*}
Assume $F=f+g$, $f\in A^p(D)$ and $g\in A^p_0(\hat \C\setminus \overline{D'})$. Then $C_{\T}(F)=C_{\T}(f)+C_{\T}(g)$. We calculate
\begin{equation*}C_{\T}(F)=\begin{cases}f(z)&\textup{if, }z\in D\\
-g(z)&\textup{if, }z\notin D\end{cases}\end{equation*}
If $\gamma$ is the boundary of $\partial D'$, we can similarly prove that
\begin{equation*}C_{\gamma}(F)=\begin{cases}f(z)&\textup{if, }z\in D'\\
-g(z)&\textup{if, }z\notin D'\end{cases}\end{equation*}
We have
\begin{equation*}\lim_{z\to 1,z\in D}C_{\T}(F)=\lim_{z\to 1,z\in \Omega}C_{\T}(F)=\lim_{z\to 1,z\in D'}C_{\gamma}(F)\in \C\end{equation*}
and
\begin{equation*}\lim_{z\to 1,z\notin D}C_{\T}(F)=\lim_{z\to 1,z\notin D}C_{\gamma}(F)=\lim_{z\to 1,z\in \Omega}C_{\gamma}(F)\in \C\end{equation*}

\begin{que}\label{FinalQuestion}Let $p<+\infty$. Does there exist $F\in A^p(\Omega)$ so that either one of the aforementioned limits doesn't exist, or a pair of limits exists but the limits are not equal? In particular, is there a function $F\in A(D)$ such that $\lim_{z\to 1,|z|<1}C_{\T}(F)$ does not exist (in $\C$) ?\end{que}

\noindent \textbf{Acknowledgment}: We would like to thank V. Nestoridis for bringing these problems to our attention, and for his always useful suggestions and help. We would also like to thank S. Mercourakis for mentioning reference \cite{Wojtaszczyk} to us.

\end{document}